\documentclass[a4paper,11pt]{article}
\usepackage{amsmath,amssymb,enumerate,color}
\usepackage{amsthm,color}




\newcommand{\R}{\mathbb{R}}

\newcommand{\N}{\mathbb{N}}
\newcommand{\ep}{\varepsilon}
\newcommand{\pa}{\partial}

\newcommand{\lr}[1]{{}\langle{}#1{}\rangle{}}

\setlength{\topmargin}{0mm}
\setlength{\oddsidemargin}{0mm}
\setlength{\evensidemargin}{0mm}
\setlength{\textwidth}{160mm}
\setlength{\textheight}{220mm}

\newtheorem{theorem}{Theorem}[section]
\newtheorem{lemma}[theorem]{Lemma}
\newtheorem{proposition}[theorem]{Proposition}
\newtheorem{corollary}[theorem]{Corollary}
\theoremstyle{remark}
\newtheorem{remark}{Remark}[section]
\theoremstyle{definition}

\newtheorem{definition}{Definition}[section]

\numberwithin{equation}{section}

\makeatletter
\def\@cite#1#2{[{{\bfseries #1}\if@tempswa , #2\fi}]}
\makeatother                  
\begin{document}
\begin{center}
\Large{{\bf
Global existence of solutions to semilinear damped\\ wave equation 
with slowly decaying \\inital data in exterior domain
}}
\end{center}

\vspace{5pt}

\begin{center}
Motohiro Sobajima%
\footnote{
Department of Mathematics, 
Faculty of Science and Technology, Tokyo University of Science,  
2641 Yamazaki, Noda-shi, Chiba, 278-8510, Japan,  
E-mail:\ {\tt msobajima1984@gmail.com}}
\end{center}

\newenvironment{summary}{\vspace{.5\baselineskip}\begin{list}{}{%
     \setlength{\baselineskip}{0.85\baselineskip}
     \setlength{\topsep}{0pt}
     \setlength{\leftmargin}{12mm}
     \setlength{\rightmargin}{12mm}
     \setlength{\listparindent}{0mm}
     \setlength{\itemindent}{\listparindent}
     \setlength{\parsep}{0pt}
     \item\relax}}{\end{list}\vspace{.5\baselineskip}}
\begin{summary}
{\footnotesize {\bf Abstract.}
In this paper, we discuss the global existence of weak solutions to 
the semilinear damped wave equation 
\begin{equation*}
\begin{cases}
\pa_t^2u-\Delta u + \pa_tu = f(u)
& \text{in}\ \Omega\times (0,T),
\\
u=0
& \text{on}\ \pa\Omega\times (0,T),
\\
u(0)=u_0, \pa_tu(0)=u_1
& \text{in}\  \Omega,
\end{cases}
\end{equation*}
in an exterior domain $\Omega$ in $\R^N$ $(N\geq 2)$, 
where $f:\R\to \R$ is a smooth function 
behaves like $f(u)\sim |u|^p$.
From the view point of weighted energy estimates 
given by Sobajima--Wakasugi \cite{SoWa4}, 
the existence of global-in-time solutions 
with small initial data in the sense of 
$\lr{x}^{\lambda}u_0, \lr{x}^{\lambda}\nabla u_0, \lr{x}^{\lambda}u_1\in L^2(\Omega)$
with $\lambda\in (0,\frac{N}{2})$
is shown 
under the condition $p\geq 1+\frac{4}{N+2\lambda}$. 
The sharp lower bound for the lifespan of blowup solutions
with small initial data $(\ep u_0,\ep u_1)$ is also given. 

}
\end{summary}

{\footnotesize{\it Mathematics Subject Classification}\/ (2010): %
Primary: 
35L20,  
35B33   
}

{\footnotesize{\it Key words and phrases}\/: %
Semilinear damped wave equation, 
Exterior domain, 
Critical exponent, 
Weighted energy estimates of polynomial type. 
}

\section{Introduction}
In this paper, we consider 
the initial-boundary value problem of the semilinear damped wave equation 
\begin{equation}\label{eq:ndw}
\begin{cases}
\pa_t^2u(x,t)-\Delta u(x,t) + \pa_tu(x,t) = f\big(u(x,t)\big), 
& (x,t) \in \Omega\times (0,T),
\\
u(x,t)=0,
& (x,t) \in \pa\Omega\times (0,T),
\\
u(x,0)=u_0(x), 
& x \in \Omega,
\\
\pa_tu(x,0)=u_1(x),
& x \in \Omega,
\end{cases}
\end{equation}
where $\pa_t=\frac{\pa}{\pa t}$, $\Delta=\sum_{j=1}^N\frac{\pa^2}{\pa x_j^2}$
and $\Omega\subset\R^N$ $(N\in \N, N\geq 2)$ 
is an exterior domain (that is, $\R^N\setminus \Omega$ is bounded) 
having a smooth boundary $\pa\Omega$.  
The function $u:\overline{\Omega}\times [0,T)\to \R$ is unknown, 
the pair $(u_0,u_1)\in H^1_0(\Omega)\times L^2(\Omega)$ 
is given 
and $f\in C^1(\R)$ satisfies that  
there exist constants $p>1$ and $C_f\geq 0$ such that 
\begin{align}\label{ass.f}
f(0)=0, \quad |f(\xi)-f(\eta)|\leq C_f(|\xi|+|\eta|)^{p-1}|\xi-\eta|, \quad \xi,\eta\in\R.
\end{align}
The damped wave equation was introduced 
by Cattaneo \cite{Cattaneo} and Vernotte \cite{Vernotte} 
to discuss an model of heat conduction with finite propagation property. 
The equation is derived by combining 
``balance law'' $u_t = {\rm div} q$ and 
``time-delayed Fourier law'' $\tau q_t + q = \nabla u$, where $q$ is the heat
flux and $\tau$ is small enough.
Therefore one can expect that the behavior of solution to \eqref{eq:ndw}
can be approximated by the one of solution to heat equation.

The aim of this study is to give
global existence of solutions to \eqref{eq:ndw} 
under the smallness of initial data in the sense of 
weighted norm
\[
\int_{\Omega}\Big(|\nabla u_0|^2+|u_0|^2+|u_1|^2\Big)\lr{x}^{2\lambda}\,dx
\]
for fixed $\lambda\geq 0$, by using the idea of 
weighted energy estimates including Kummer's confluent hypergeometric 
functions, originated in \cite{SoWa4}.

For the case $\Omega=\R^N$, there are many previous works 
dealing with the analysis of critical exponent 
in the following sense: $p_c$ is the critical exponent 
if $1<p<p_c$, then a blowup solution with sufficiently small initial data exists,
on the other hand, 
if $p>p_c$, then smallness of initial data provides the existence 
of global-in-time solutions. 
The critical exponent for compactly supported initial data 
was given by Todorova--Yordanov \cite{ToYo01} with $p_c=1+\frac{2}{N}$,
by introducing the energy estimates with exponential type weight function. 
Similar philosophy can be found in Ikehata--Tanizawa \cite{IkTa05} 
for non-compactly supported initial data.  

It should be noticed that the critical exponent 
is exactly the same as Fujita exponent for 
semilinear heat equation found in the pioneering work in Fujita \cite{Fujita66}. 
The critical case $p=p_c$ was discussed in Zhang \cite{Zhang01} 
and blowup result for small initial data is proved. 

For the framework of weak solutions in $H^1$ with non-compactly supported initial data, 
Nakao--Ono \cite{NaOn93} found that 
the existence of global-in-time solutions with sufficiently small initial data
when $1+\frac{4}{N}\leq p\leq \frac{N+2}{N-2}$. 
Later, 
Ikehata--Ohta \cite{IkOh02} discussed the critical exponent of \eqref{eq:ndw}  
with initial data $(u_0,u_1)\in (H^1\cap L^r)\times (L^2\cap L^r)$. 
It is proved that the critical exponent for this problem 
is $p_c=1+\frac{2r}{N}$: 
if $1<p<1+\frac{2r}{N}$, then nonexistence of global-in-time solutions 
occurs, and 
if $1+\frac{2r}{N}<p<\frac{N+2}{N-2}$, then global-in-time solutions 
exist for sufficiently small initial data. 

The initial data in the class 
$(H^{\alpha,0}\cap H^{0,\delta})\times (H^{\alpha-1,0}\cap H^{0,\delta})$
are discussed by Hayashi--Kaikina--Naumkin \cite{HaKaNa04DIE}, where 
$H^{\ell,m}=\{\phi\in L^2\;;\;\|\lr{x}^{m}\lr{i\pa_x}^\ell \phi\|_{L^2}<\infty\}$ 
with the Fourier multiplier $\lr{i\pa_x}=\mathcal{F}^{-1}\lr{\xi}\mathcal{F}$.
They proved the existence of global-in-time solutions (in $L^1$)
to \eqref{eq:ndw} with $p>1+\frac{2}{N}$ and 
and a heat-like asymptotic profile of solutions.
The analysis of \cite{HaKaNa04} 
is generalized by Ikeda--Inui--Wakasugi \cite{IkInWa17}
in the framework of $(H^{\alpha,0}\cap H^{0,\delta})\times (H^{\alpha-1,0}\cap H^{0,\delta})$ 
which can be embedded into $L^r$-space ($r\in (1,2]$).  
In their paper the critical exponent is determined as 
$1+\frac{2r}{N}$ which is the same as \cite{IkOh02}.
Recently, Inui--Ikeda--Okamoto--Wakasugi \cite{IkInOkWa_arxiv} 
discussed the critical case $p=1+\frac{2r}{N}$ under some restriction 
on $r$, which is required by a derivative loss of $L^p$-$L^q$ estimates 
for high frequency part of solutions to the linear damped wave equation. 
We note that 
for the analysis of the Cauchy problem of 
the equation $\pa_t^2u-\Delta u + \frac{\mu}{(1+t)^{\beta}}\pa_tu=f(u)$
(with time-dependent damping term), 
a similar study can be found in 
the literature (see e.g., 
Wirth \cite{Wirth04,Wirth06,Wirth07},
Nishihara \cite{Nishihara11}, 
Lin--Nishihara--Zhai \cite{LiNiZh12}, 
Wakasugi \cite{Wakasugi14}, 
Lai--Takamura--Wakasa \cite{LaTaWa17}, 
and Ikeda--Sobajima \cite{IkSo2} and their reference therein).

For the case of damped wave equation in an exterior domain, 
Ono \cite{Ono03} discussed the existence of global-in-time solutions to 
\eqref{eq:ndw} under $2\leq N\leq 6, 1+4/N+2<p<1+2/(N-2)$ 
by using the result of Dan--Shibata \cite{DaSh95}. 
On the one hand, 
Ikehata \cite{Ikehata04,Ikehata05} 
proved the existence of global-in-time solutions 
for $N=2, 2<p<\infty$ by using weighted energy estimates. 
Takeda--Ogawa \cite{TaOg09} proved non-existence 
of global-in-time solutions 
to \eqref{eq:ndw} when $N\geq 2, 1<p<1+2/N$
by employing the method of Kaplan \cite{Kaplan63} and Fujita \cite{Fujita66}. 
Note that 
in the analysis of weighted energy estimates 
of the linear problem 
with a class of space-dependent damping term
in exterior domain 
can be found in 
Ikehata \cite{Ikehata05-wee}, 
Todorova--Yordanov \cite{ToYo09},
Radu--Todorova--Yordanov \cite{RaToYo09,RaToYo10}
and 
Wakasugi--Sobajima \cite{SoWa1,SoWa2}, 
however, 
their weight function forms $e^{c(1+|x|^2)^{2-\alpha}/(1+t)^{1+\beta}}$ 
and therefore 
the initial data must have an exponential decay property.

Recently, Wakasugi--Sobajima \cite{SoWa4}
found a framework of weighted energy estimates 
with a weight function of polynomial type. 
In \cite{SoWa4}, 
the weight function is taken as 
the inverse of the positive solution of heat equation $\pa_t\Phi=\Delta\Phi$ 
including the Kummer confluent hypergeometric function (see Section 2.1 below). 
This enables us to obtain the weighted energy estimate 
of polynomial type. 

The purpose of the present paper is 
to discuss the nonlinear problem of damped wave equation 
in exterior domain in view of weighted energy estimate 
of polynomial type introduced by \cite{SoWa4}.  

To state the main result, we first give 
the definition of 
the solutions to \eqref{eq:ndw} in this paper.  
\begin{definition}[Weak solution]
\label{def:sol}
The function $u:\overline{\Omega}\times [0,T)\to \R$ is called 
a weak solution of \eqref{eq:ndw} in $(0,T)$ 
if $u$ belongs to the class
\[
S_T=
\{u\in C([0,T);H^1_0(\Omega))\cap C^1([0,T);L^2(\Omega)) \;;\;f(u(\cdot))\in C([0,T);L^2(\Omega))\}
\]
and 
$U=(u(t),\pa_tu(t))$ satisfies the  following 
integral equation in $\mathcal{H}=H^1_0(\Omega)\times L^2(\Omega)$:
\begin{align*}
U(t)=e^{-t\mathcal{A}}U_0+\int_0^te^{-(t-s)\mathcal{A}}[\mathcal{N}(U(s))]\,ds, 
\quad t\in [0,T),
\end{align*}
where $U_0=(u_0,u_1)$
$\mathcal{A}=\begin{pmatrix}0&-1\\-\Delta&1\end{pmatrix}$
with domain $D(\mathcal{A})=(H^2(\Omega)\times H^1_0(\Omega))\times H^1_0(\Omega)$
and $\mathcal{N}(u,v)=(0,f(u))$. 
\end{definition}

The existence of local-in-time solutions to 
\ref{eq:ndw} is well-known (see e.g., Ikawa \cite{Ikawa68} and Cazenave--Haraux \cite{CazenaveBOOK}). 

\begin{proposition}\label{prop:LWP}
Assume that $f$ satisfies \eqref{ass.f} with $1<p\leq \frac{N}{N-2}$. 
Then for every $(u_0,u_1)\in H^1_0(\Omega)\times L^2(\Omega)$, 
there exists a positive constant $T>0$ depending only on 
$N,p,C_0,\|u_0\|_{H^1},\|u_1\|_{L^2}$ 
such that there exists a unique weak solution $u$ in $(0,T)$. 
\end{proposition}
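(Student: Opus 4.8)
The plan is to construct the weak solution as the unique fixed point of the Duhamel map
\[
\Phi(U)(t)=e^{-t\mathcal{A}}U_0+\int_0^t e^{-(t-s)\mathcal{A}}\mathcal{N}(U(s))\,ds
\]
on a closed ball of $X_T:=C([0,T];\mathcal{H})$, by the Banach fixed point theorem. The first ingredient is that $-\mathcal{A}$ generates a $C_0$-semigroup on $\mathcal{H}=H^1_0(\Omega)\times L^2(\Omega)$, equipped with its natural norm $\|(u,v)\|_{\mathcal{H}}^2=\|u\|_{H^1}^2+\|v\|_{L^2}^2$. A direct computation (integration by parts, using $v\in H^1_0(\Omega)$ on the boundary term) shows $\langle\mathcal{A}U,U\rangle_{\mathcal{H}}=\|v\|_{L^2}^2-\int_\Omega uv\,dx\ge-\tfrac12\|U\|_{\mathcal{H}}^2$, so $\mathcal{A}+\tfrac12 I$ is monotone; moreover $\lambda I+\mathcal{A}$ is surjective onto $\mathcal{H}$ for $\lambda>\tfrac12$, since after eliminating the first component the resolvent equation reduces to an elliptic problem $-\Delta u+cu=g$ with $c>0$, solvable in $H^2(\Omega)\cap H^1_0(\Omega)$ by the Lax--Milgram lemma. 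Thus, by the Lumer--Phillips theorem, $\|e^{-t\mathcal{A}}\|_{\mathcal{H}\to\mathcal{H}}\le e^{t/2}$ for $t\ge 0$. (On the unbounded exterior domain the energy seminorm does not control $\|u\|_{L^2}$, which is why one only obtains a quasi-contraction rather than a contraction; this is harmless for local theory.)

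The heart of the matter is the estimate for the superposition operator $u\mapsto f(u)$, and this is exactly where the restriction $1<p\le\frac{N}{N-2}$ enters. Since $\mathcal{N}(u,v)=(0,f(u))$, one has $\|\mathcal{N}(U)\|_{\mathcal{H}}=\|f(u)\|_{L^2}$, while \eqref{ass.f} gives the pointwise bound $|f(u)-f(v)|\le C_f(|u|+|v|)^{p-1}|u-v|$. Applying H\"older's inequality with exponents $r,s$ satisfying $\frac1r+\frac1s=\frac12$,
\[
\|f(u)-f(v)\|_{L^2}\le C_f\,\big\||u|+|v|\big\|_{L^{(p-1)r}}^{\,p-1}\,\|u-v\|_{L^{s}} .
\]
Writing $2^\ast=\frac{2N}{N-2}$, the condition $1<p\le\frac{N}{N-2}$ is precisely what permits a choice of $r,s$ for which both exponents $(p-1)r$ and $s$ lie in the admissible Sobolev window $[2,2^\ast]$ (for instance $s=2^\ast$, $r=N$ when $p\ge 1+\frac2N$). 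The Sobolev embedding $H^1_0(\Omega)\hookrightarrow L^q(\Omega)$ for $2\le q\le 2^\ast$ (and for all finite $q$ when $N=2$) then yields the local Lipschitz estimate
\[
\|\mathcal{N}(U)-\mathcal{N}(V)\|_{\mathcal{H}}\le C\big(\|u\|_{H^1}+\|v\|_{H^1}\big)^{p-1}\|u-v\|_{H^1},
\]
with $C=C(N,p,C_f)$; taking $V=0$ gives $\|\mathcal{N}(U)\|_{\mathcal{H}}\le C\|U\|_{\mathcal{H}}^{p}$.

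With these two ingredients the fixed-point argument is routine. Set $R:=2\|U_0\|_{\mathcal{H}}$ and $B_R=\{U\in X_T:\sup_{t\in[0,T]}\|U(t)\|_{\mathcal{H}}\le R\}$. Using $\|e^{-\tau\mathcal{A}}\|_{\mathcal{H}}\le e^{\tau/2}$ and the estimates above, for $U,V\in B_R$, $t\in[0,T]$ with $T\le 1$,
\begin{align*}
\|\Phi(U)(t)\|_{\mathcal{H}}&\le e^{T/2}\|U_0\|_{\mathcal{H}}+e^{T/2}C\,T\,R^{p},\\
\|\Phi(U)(t)-\Phi(V)(t)\|_{\mathcal{H}}&\le e^{T/2}C\,T\,(2R)^{p-1}\sup_{s\in[0,T]}\|U(s)-V(s)\|_{\mathcal{H}} .
\end{align*}
Choosing $T=T(N,p,C_f,\|U_0\|_{\mathcal{H}})\in(0,1]$ so small that $e^{T/2}\le\frac32$, $e^{T/2}C\,T\,R^{p}\le\frac12\|U_0\|_{\mathcal{H}}$ and $e^{T/2}C\,T\,(2R)^{p-1}\le\frac12$ makes $\Phi$ map $B_R$ into itself and act as a contraction there; the Banach fixed point theorem supplies a unique $U\in B_R$ with $U=\Phi(U)$, and uniqueness in the whole of $X_T$ follows from the same Lipschitz bound together with Gronwall's inequality. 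Finally the regularity required in Definition~\ref{def:sol} is read off from the construction: $U\in C([0,T];\mathcal{H})$ means $u\in C([0,T];H^1_0(\Omega))\cap C^1([0,T];L^2(\Omega))$, and $s\mapsto f(u(s))\in L^2(\Omega)$ is continuous by the Lipschitz estimate applied along $u\in C([0,T];H^1_0(\Omega))$.

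The only genuinely delicate point is the nonlinear estimate of the second paragraph: one must verify that the H\"older exponents can be selected so that all the resulting Lebesgue exponents fall inside the Sobolev window $[2,2^\ast]$, which is feasible exactly under $1<p\le\frac{N}{N-2}$, and one must be mindful that on the unbounded domain $\Omega$ only the range $2\le q\le 2^\ast$ is available. The semigroup generation and the contraction mechanics are otherwise standard. (Here we read the constant $C_0$ in the statement as the constant $C_f$ appearing in \eqref{ass.f}.)
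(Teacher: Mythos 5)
Your proof is correct and coincides with the standard argument the paper itself relies on: the paper gives no proof of Proposition \ref{prop:LWP}, deferring to Ikawa \cite{Ikawa68} and Cazenave--Haraux \cite{CazenaveBOOK}, and those references proceed exactly as you do, generating a quasi-contractive semigroup for $\mathcal{A}$ on $\mathcal{H}=H^1_0(\Omega)\times L^2(\Omega)$ via Lumer--Phillips and then running a Banach fixed point argument for the Duhamel map. Your exponent bookkeeping --- choosing the H\"older exponents so that $(p-1)r$ and $s$ fall in the Sobolev window $[2,2^\ast]$, which is precisely where $1<p\le\frac{N}{N-2}$ is used, with $N=2$ handled by the full subcritical range of embeddings --- is the one delicate point, and you handle it correctly; your reading of $C_0$ in the statement as the constant $C_f$ of \eqref{ass.f} is also the intended one.
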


The notion of lifespan is the following. 
\begin{definition}[Lifespan]
\label{def:lifespan}
For the solution $u$ of \eqref{eq:ndw} with initial data 
$(u_0,u_1)\in H^1_0(\Omega)\times L^2(\Omega)$, we define lifespan 
$T_* = T_*(u_0,u_1)>0$ 
(the maximal existence time of solution $u$) is as follows:
\[
T_* = \sup \{T>0\;;\; \text{\eqref{eq:ndw} has a unique weak solution in }(0,T)\}.
\]
\end{definition}

Now we are in a position to state the main result of the present paper. 

\begin{theorem}\label{main}
Assume that $f$ satisfies \eqref{ass.f} with $1<p\leq \frac{N}{N-2}$. 
Then for every $\lambda \in [0,\frac{N}{2})$, 
there exists a positive constant $\delta_{\lambda}^*>0$ such that 
the following assertion holds:

For every $(u_0,u_1)\in H^1_0(\Omega)\times L^2(\Omega)$ satisfying
\[
\int_{\Omega}\Big(|\nabla u_0(x)|^2+|u_0(x)|^2+|u_1(x)|^2\Big)\lr{x}^{2\lambda}\,dx\leq \delta_\lambda^*, 
\] 
one has $T_*(u_0,u_1)=\infty$ when $p\geq 1+\frac{4}{N+2\lambda}$. Namely, there exists a global weak solution $u$ of \eqref{eq:ndw}
with initial data $(u_0,u_1)$.
Moreover, $u$ satisfies the following weighted estimates:
there exists a positive constant $M_{\lambda}^*>0$ such that 
\begin{gather*}
\int_{\Omega}\left(|\nabla u(x,t)|^2+|\pa_tu(x,t)|^2\right)\left(1+t+|x|^{2}\right)^{\lambda}\,dx
\leq \frac{M_{\lambda}^*}{1+t}, 
\\
\int_{\Omega}|u_1(x,t)|^2\left(1+t+|x|^{2}\right)^{\lambda}\,dx
\leq M_{\lambda}^*,
\\
\int_0^\infty\!\!
\int_{\Omega}|\nabla u(x,s)|^2\left(1+s+|x|^{2}\right)^{\lambda}\,dx\,ds
\leq M_{\lambda}^*, 
\\
\int_0^\infty(1+s)
\int_{\Omega}|\pa_t u(x,s)|^2\left(1+s+|x|^{2}\right)^{\lambda}\,dx\,ds
\leq M_{\lambda}^*.
\end{gather*}
On the other hand, for the case $1<p<1+\frac{4}{N+2\lambda}$, 
one has the following lower estimate of lifespan
\[
T_*(\ep u_0,\ep u_1)\geq C\ep^{-(\frac{1}{p-1}-\frac{N+2\lambda}{4})^{-1}}
\]
for some $C>0$ (independent of $\ep$) and sufficiently small $\ep>0$. 
\end{theorem}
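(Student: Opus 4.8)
The plan is to derive both assertions from a single a priori estimate for the weighted quantity
\[
\mathcal{M}(t):=\sup_{0\le s\le t}\left[(1+s)\int_{\Omega}\big(|\nabla u|^2+|\pa_tu|^2\big)(1+s+|x|^2)^{\lambda}\,dx+\int_{\Omega}|u|^2(1+s+|x|^2)^{\lambda}\,dx\right],
\]
and then, writing $Y(t)=\mathcal{M}(t)^{1/2}$, to run a continuity argument whose outcome depends on whether $p\ge 1+\frac{4}{N+2\lambda}$. First I would bring in the weight $\varphi_\lambda(x,t)$ built in Section~2.1 from the positive self-similar solution of $\pa_t\Phi=\Delta\Phi$ through Kummer's confluent hypergeometric function; it is comparable to $(1+t+|x|^2)^{\lambda}$, and both $\pa_t\varphi_\lambda$ and $\nabla\varphi_\lambda$ are controlled by $\varphi_\lambda$ itself via the heat equation. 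Testing \eqref{eq:ndw} against $\pa_tu\,\varphi_\lambda$ and against $u\,\varphi_\lambda$ and integrating over $\Omega$ (the Dirichlet condition $u=\pa_tu=0$ on $\pa\Omega$ annihilates every boundary term), the cross terms carrying $\nabla\varphi_\lambda$ and $\pa_t\varphi_\lambda$ are absorbed exactly as in the linear estimates of \cite{SoWa4}. For the homogeneous part this reproduces the four weighted bounds claimed in the theorem; the only genuinely new contribution is the forcing generated by $f(u)$.

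The decisive step is to estimate this forcing. From \eqref{ass.f} with $\eta=0$ one has $|f(\xi)|\le C_f|\xi|^{p}$, so the forcing is governed by $\|f(u)\|_{L^2(\varphi_\lambda\,dx)}^2\lesssim\int_{\Omega}|u|^{2p}\varphi_\lambda\,dx$. Writing $\tau=1+s$ and rescaling at the diffusive length $|x|\sim\sqrt{\tau}$, the weight decomposes as $\varphi_\lambda\approx\tau^{\lambda}\big(1+|x|^2/\tau\big)^{\lambda}$, and a Gagliardo--Nirenberg inequality adapted to this weight (whose spatial integrability is exactly supplied by the embedding $H^1_0(\Omega)\hookrightarrow L^{2p}(\Omega)$, i.e.\ by the hypothesis $p\le\frac{N}{N-2}$) yields, after all the $\tau$-powers collapse,
\[
\int_{\Omega}|u|^{2p}\varphi_\lambda\,dx\;\lesssim\;\left(\int_{\Omega}|\nabla u|^2\varphi_\lambda\,dx\right)^{\frac{(N+2\lambda)(p-1)}{2}}\left(\int_{\Omega}|u|^2\varphi_\lambda\,dx\right)^{p-\frac{(N+2\lambda)(p-1)}{2}}.
\]
Inserting the decay rates $\int_\Omega|\nabla u|^2\varphi_\lambda\lesssim\mathcal{M}\,\tau^{-1}$ and $\int_\Omega|u|^2\varphi_\lambda\lesssim\mathcal{M}$ encoded in $\mathcal{M}$, the powers of $\tau$ assemble into the single decay
\[
\|f(u(s))\|_{L^2(\varphi_\lambda\,dx)}\;\lesssim\;\mathcal{M}(s)^{p/2}\,(1+s)^{-\frac{(N+2\lambda)(p-1)}{4}}.
\]

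Feeding this into the weighted energy identities and integrating in time closes the estimate in the schematic form
\[
Y(t)\;\le\;C\,Y(0)+C\,K(t)\,Y(t)^{p},\qquad K(t)=\int_0^t(1+s)^{-\frac{(N+2\lambda)(p-1)}{4}}\,ds,
\]
with $Y(0)\sim\ep$ for the scaled data $(\ep u_0,\ep u_1)$. When $p\ge 1+\frac{4}{N+2\lambda}$ the exponent obeys $\frac{(N+2\lambda)(p-1)}{4}\ge 1$, so $K$ stays bounded as $t\to\infty$, and a standard continuity argument turns $Y(0)\le(\delta_\lambda^*)^{1/2}$ into a global bound $Y(t)\le(M_\lambda^*)^{1/2}$; by Proposition~\ref{prop:LWP} this forces $T_*=\infty$ and delivers the four stated weighted estimates. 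When $1<p<1+\frac{4}{N+2\lambda}$ one instead has $K(T)\sim T^{\,\beta}$ with $\beta=1-\frac{(N+2\lambda)(p-1)}{4}>0$, and the continuity argument persists precisely while $K(T)\,Y(0)^{p-1}\lesssim 1$, i.e.\ while $T^{\beta}\lesssim\ep^{-(p-1)}$. Solving for $T$ gives
\[
T_*(\ep u_0,\ep u_1)\;\gtrsim\;\ep^{-\frac{p-1}{\beta}}=\ep^{-\left(\frac{1}{p-1}-\frac{N+2\lambda}{4}\right)^{-1}},
\]
which is exactly the asserted lower bound.

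The main obstacle is concentrated in the nonlinear estimate. One must establish the Gagliardo--Nirenberg inequality above for the genuinely space--time-dependent weight $(1+t+|x|^2)^{\lambda}$, reconciling its effective scaling dimension $N+2\lambda$ with the true $N$-dimensional Sobolev embedding that fixes the integrability exponent, and confirm that its time weight is exactly $(1+s)^{-(N+2\lambda)(p-1)/4}$ so that integrability of $K$ coincides with $p\ge 1+\frac{4}{N+2\lambda}$. The most delicate point is the endpoint $p=1+\frac{4}{N+2\lambda}$: there the integrand of $K$ decays precisely like $(1+s)^{-1}$, so the crude bound is only logarithmically divergent, and closing global existence requires exploiting the actual time-decay of $\int_\Omega|u|^2\varphi_\lambda\,dx$ (equivalently the space--time integrability $\int_0^\infty\!\!\int_\Omega|\nabla u|^2\varphi_\lambda\,dx\,ds\le M_\lambda^*$) rather than its mere boundedness, which is where the full strength of the weighted energy estimates of \cite{SoWa4} must be brought to bear.
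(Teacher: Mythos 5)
Your argument as written does not prove the critical case $p=1+\frac{4}{N+2\lambda}$, which the theorem asserts (the hypothesis is $p\geq 1+\frac{4}{N+2\lambda}$, with equality included). Your claim that $K(t)=\int_0^t(1+s)^{-\frac{(N+2\lambda)(p-1)}{4}}ds$ ``stays bounded'' when $p\geq 1+\frac{4}{N+2\lambda}$ is false at the endpoint: there the exponent equals exactly $1$ and $K(t)=\log(1+t)\to\infty$, so the continuity argument does not close. You flag this yourself in the last paragraph, but the repair you gesture at --- ``exploiting the actual time-decay of $\int_\Omega|u|^2\varphi_\lambda\,dx$'' --- is not available: the a priori structure (and indeed the theorem itself) only gives \emph{boundedness} of that quantity, not decay. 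What actually closes the endpoint in the paper is a different interpolation that makes the nonlinearity \emph{linear} in the gradient dissipation, with no time factor at all: H\"older with $\theta=\frac{N}{N+2\lambda}$ splits $\int|u|^{p+1}\Psi^\lambda\,dx$ into a $\Psi^{\lambda-1}$-weighted $L^2$ piece and an $L^{q+1}$ piece at the Fujita-type exponent $q=1+\frac4N$; then the unweighted Gagliardo--Nirenberg inequality (Lemma \ref{lem:GNineq}) and the weighted Hardy inequality (Lemma \ref{lem:hardy}) yield
\[
\int_{\Omega}|u|^{p+1}\Psi^{\lambda}\,dx
\leq C\left(\int_{\Omega}|\nabla u|^2\Psi^{\lambda}\,dx\right)
\left(\int_{\Omega}|u|^{2}\Psi^{\lambda}\,dx\right)^{\frac{p-1}{2}}.
\]
Since the space--time gradient integral $Y_\lambda(t)=\int_0^t\!\int_\Omega|\nabla u|^2\Psi^\lambda\,dx\,ds$ is retained \emph{with favorable sign on the left-hand side} of the weighted estimate (Proposition \ref{prop:WEE}), the time-integrated nonlinearity is bounded by $M_\lambda(t)^{\frac{p-1}{2}}Y_\lambda(t)\leq \widetilde M_\lambda(t)^{\frac{p+1}{2}}$ with $\widetilde M_\lambda=M_\lambda+Y_\lambda$, and one gets $\widetilde M_\lambda\leq m_\lambda^*+C\widetilde M_\lambda^{\frac{p+1}{2}}$ exactly as in the supercritical case. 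Your setup discards $Y_\lambda$ after the linear step, so this mechanism is not accessible from your estimate; this is the missing idea, not a routine refinement.

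Two secondary points on the part you do carry out. First, the bookkeeping in your schematic inequality is off: pairing $\|f(u)\|_{L^2(\varphi_\lambda dx)}$ against $\|\pa_tu\|_{L^2(\varphi_\lambda dx)}$ by bare Cauchy--Schwarz in the $(t_0+t)$-weighted energy identity produces the integrand $(1+s)^{\frac12-\frac{(N+2\lambda)(p-1)}{4}}$, which is integrable only for $p>1+\frac{6}{N+2\lambda}$; to reach the stated threshold you must absorb the $\pa_tu$ factor into the damping term $-(t_0+t)\int|\pa_tu|^2\Psi^\lambda\,dx$ via Young, leaving $\int_0^t(1+s)^{1-\frac{(N+2\lambda)(p-1)}{2}}\,ds$ against $\mathcal M^p$ --- same threshold and the same (correct) lifespan exponent $T_*\gtrsim\ep^{-(\frac{1}{p-1}-\frac{N+2\lambda}{4})^{-1}}$, but not the displayed form $K(t)Y(t)^p$. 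Second, your weighted Gagliardo--Nirenberg inequality of ``effective dimension $N+2\lambda$'' is asserted rather than proved; it does hold and can be obtained exactly as in the paper's Lemma \ref{lem:GNtype} (substitute $w=u\Psi^{\lambda/(2p)}$, use $|\nabla\Psi|\leq\Psi^{1/2}$ and Lemma \ref{lem:hardy}), with $2p\leq\frac{2N}{N-2}$, i.e.\ $p\leq\frac{N}{N-2}$, supplying the needed embedding. Apart from these points your route --- multiplier energies built from the Kummer-based weights, weighted Gagliardo--Nirenberg, continuity argument --- coincides with the paper's, the only genuine variant being that you estimate the source as $\|f(u)\|_{L^2}$ (integrals of $|u|^{2p}$) where the paper works with the energy-compatible quantities $F(u)$ and $uf(u)$ (integrals of $|u|^{p+1}$).
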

\begin{remark}
In the case $\Omega=\R^N$, the global-in-time solution
of \ref{eq:ndw} 
with slowly decaying initial data (like $\lr{x}^{-\mu}$)
was constructed in Hayashi--Kaikina--Naumkin \cite{HaKaNa04DIE} 
(for $p>1+\frac{2}{N}$) under a weaker assumption than ours.
In the case of exterior domain, 
it is already discussed when $\lambda=1$. 
However, the case $\lambda\in (0,\frac{N}{2})$ 
is not dealt with so far. 
The global existence for weighted-$L^2$-type initial data 
is now established. 
\end{remark}

\begin{remark}
For $L^r$-type initial data, 
Ikeda--Inui--Okamoto--Wakasugi \cite{IkInOkWa_arxiv}
proved the case $p=1+\frac{2r}{N}$ under some restriction on $r$,
which is critical in this situation and 
related to our critical case $p=1+\frac{4}{N+2\lambda}$. 
Although their aspect is quite far form ours, 
it should be noticed that 
the framework in \cite{IkInOkWa_arxiv} 
is difficult to apply to the case of exterior domain 
because of the use of a deep Fourier analysis. 
\end{remark}

\begin{remark}
For the lifespan estimates, 
Ikeda--Inui--Wakasugi \cite{IkInWa17}
provided upper bound of lifespan estimates 
with a specific situation
\[
\Omega=\R^N, \quad
u_0+u_1 \geq \max\{1,|x|\}^{-\frac{N}{2}-\lambda}, \quad \lambda<\frac{2}{p-1}-\frac{N}{2}, \quad
f(u)=|u|^p. 
\]
They proved $T_*(\ep u_0,\ep u_1)\geq C\ep^{-(\frac{1}{p-1}-\frac{N+2\lambda}{4})^{-1}}$. 
Combining their result, we can assert that 
the lower estimate in Theorem \ref{main} is almost sharp.
\end{remark}
As a corollary of Theorem \ref{main}, we can deduce 
the existence of global-in-time solutions to 
\eqref{eq:ndw} with $p>1+\frac{2}{N}$ for polynomially decaying initial data. 

\begin{corollary}\label{cor}
Assume that $f$ satisfies \eqref{ass.f}. 
Then for every $1+\frac{2}{N}<p\leq \frac{N}{N-2}$, 
there exists a positive constant $\delta_{p}^{**}>0$ such that 
the following assertion holds:
For every $(u_0,u_1)\in H^1_0(\Omega)\times L^2(\Omega)$ satisfying
\[
\int_{\Omega}\Big(|\nabla u_0(x)|^2+|u_0(x)|^2+|u_1(x)|^2\Big)\lr{x}^{N}\,dx\leq \delta_p^{**}, 
\] 
one has $T_*(u_0,u_1)=\infty$. 
\end{corollary}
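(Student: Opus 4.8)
The plan is to deduce Corollary \ref{cor} directly from Theorem \ref{main} by choosing, for each admissible $p$, a suitable weight exponent $\lambda$ that lies strictly below the forbidden endpoint $N/2$. This is a reduction, not a new estimate, so no fresh analysis of the equation is needed.

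First I would rewrite the global-existence condition $p\geq 1+\frac{4}{N+2\lambda}$ of Theorem \ref{main} as a condition on $\lambda$. Since $\lambda\mapsto 1+\frac{4}{N+2\lambda}$ is strictly decreasing on $[0,\infty)$, solving for $\lambda$ gives
\[
p\geq 1+\frac{4}{N+2\lambda}\iff \lambda\geq \lambda_p:=\frac{2}{p-1}-\frac{N}{2}.
\]
The key observation is that the hypothesis $p>1+\frac{2}{N}$ forces $\frac{1}{p-1}<\frac{N}{2}$, hence $\frac{2}{p-1}<N$ and therefore $\lambda_p<\frac{N}{2}$. Consequently the half-open interval $[\max\{0,\lambda_p\},\frac{N}{2})$ is nonempty, and I may fix any $\lambda$ in it (for instance $\lambda=\max\{0,\lambda_p\}$). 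By construction this $\lambda$ satisfies both $\lambda\in[0,\frac{N}{2})$ and $p\geq 1+\frac{4}{N+2\lambda}$, so Theorem \ref{main} applies with this $\lambda$.

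It then remains to match the smallness hypotheses. With the chosen $\lambda$ one has $2\lambda\leq N$, and since $\lr{x}\geq 1$ this yields the pointwise bound $\lr{x}^{2\lambda}\leq \lr{x}^{N}$. Setting $\delta_p^{**}:=\delta_\lambda^*$ (the constant furnished by Theorem \ref{main} for this $\lambda$), the assumption of the corollary gives
\[
\int_{\Omega}\Big(|\nabla u_0|^2+|u_0|^2+|u_1|^2\Big)\lr{x}^{2\lambda}\,dx\leq \int_{\Omega}\Big(|\nabla u_0|^2+|u_0|^2+|u_1|^2\Big)\lr{x}^{N}\,dx\leq \delta_p^{**}=\delta_\lambda^*,
\]
which is exactly the weighted-smallness condition required by Theorem \ref{main}. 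Hence $T_*(u_0,u_1)=\infty$.

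I do not expect a genuine obstacle here. The only point demanding care is that the literal choice $2\lambda=N$ (matching the weight $\lr{x}^N$ exactly) is excluded, because Theorem \ref{main} requires $\lambda<\frac{N}{2}$. The strictness of the hypothesis $p>1+\frac{2}{N}$ is precisely what permits retreating to a slightly smaller exponent $\lambda<\frac{N}{2}$ while still meeting the growth threshold on $p$; at the critical value $p=1+\frac{2}{N}$ one would need $\lambda=\frac{N}{2}$ and the argument would break down, consistent with $1+\frac{2}{N}$ being the Fujita exponent.
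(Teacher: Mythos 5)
Your reduction is correct and is precisely the argument the paper intends (it states the corollary as a direct consequence of Theorem \ref{main} without further proof): the strict inequality $p>1+\frac{2}{N}$ guarantees $\lambda_p=\frac{2}{p-1}-\frac{N}{2}<\frac{N}{2}$, so $\lambda=\max\{0,\lambda_p\}\in[0,\frac{N}{2})$ satisfies $p\geq 1+\frac{4}{N+2\lambda}$ (with equality allowed by the theorem's critical case), and $\lr{x}^{2\lambda}\leq\lr{x}^{N}$ transfers the smallness hypothesis. No gaps; the endpoint discussion at $p=1+\frac{2}{N}$ is also accurate.
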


Of course we can proceed the same argument 
in the one-dimensional case $\Omega=\R$. 
However, the lack of the validity of 
(weighted) Hardy's inequality causes, and some difficulty appears. 
To avoid the use of Hardy's inequality, 
we use a solution of heat equation with some modification. 
As a result, we lose the result of the critical situation $p=1+\frac{4}{1+2\lambda}$. 
The precise statement for the case $\Omega=\R$ is written in the end of the last section. 

This paper is organized as follows.
In Section \ref{sec:pre}, 
we state the properties of a family of 
self-similar solutions to the heat equation 
including Kummer's Confluent hypergeometric  functions 
and collect some functional inequalities 
we need in the derivation of weight energy estimates.
Section \ref{sec:wee} is devoted to prove Theorem \ref{main}. 
Finally, we give a remark about the weighted energy estimates 
and global existence for one-dimensional case 
in Section \ref{sec:1d}. 

\section{Preliminaries}\label{sec:pre}
\subsection{The weight functions including confluent hypergeometric functions}
For $t_0\geq 1$ and $\beta\geq 0$, define
\[
\Phi_{\beta}(x,t:t_0)=(t_0+t)^{-\beta}
e^{-\frac{|x|^2}{4(t_0+t)}}
M\left(\frac{N}{2}-\beta,\frac{N}{2};\frac{|x|^2}{4(t_0+t)}\right), 
\quad 
(x,t)\in \R^N\times[0,\infty), 
\]
where $M(a,c;z)$ is Kummer's confluent hypergeometric function 
defined as 
\[
M(a,c,z)=\sum_{n=0}^\infty\frac{(a)_n}{(c)_n}\frac{z^n}{n!}
\]
with the Pochhammer symbol $(d)_0=1$ and $(d)_n=\prod_{k=0^{n-1}}(d+k-1)$. 
 These functions are given by Sobajima--Wakasugi \cite{SoWa4}
as a family of self-similar solution of 
linear heat equation $\pa_t\Phi-\Delta\Phi=0$.
Then we have the following lemma.
\begin{lemma}[{\cite{SoWa4}}]\label{Phi}
\begin{itemize}
\item[\bf (i)]for every $\beta\geq 0$, 
$\pa_t\Phi_\beta(x,t)=\Delta \Phi_\beta(x,t)$ 
for $(x,t)\in \R^N\times[0,\infty)$, 
\item[\bf (ii)]for every $\beta\geq 0$, 
$\pa_t\Phi_\beta(x,t)=-\beta\Phi_{\beta+1}(x,t)$ for $(x,t)\in \R^N\times[0,\infty)$,  
\item[\bf (iii)] for every $\beta\geq 0$, there exists a positive constant $C_\beta$ such that 
\[
|\Phi_\beta(x,t)|\leq C_\beta\left(t_0+t+\frac{|x|^2}{4}\right)^{-\beta},
\]
\item[\bf (iv)] for every $0\leq \beta<\frac{N}{2}$, there exists a positive constant $c_\beta$ such that 
\[
\Phi_\beta(x,t)\geq c_\beta \left(t_0+t+\frac{|x|^2}{4}\right)^{-\beta}.
\]
\end{itemize}
\end{lemma}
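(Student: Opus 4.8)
The plan is to pass to the self-similar variables $s=t_0+t$ and $z=\frac{|x|^2}{4s}$ and to write $\Phi_\beta(x,t:t_0)=s^{-\beta}F_\beta(z)$ with profile $F_\beta(z)=e^{-z}M(\tfrac N2-\beta,\tfrac N2;z)$. Since $\Phi_\beta$ is radial, a chain-rule computation (using $\pa_t z=-z/s$, $\pa_r z=r/(2s)$ with $r=|x|$, and $\Delta=\pa_r^2+\frac{N-1}{r}\pa_r$) gives
\[
\pa_t\Phi_\beta=s^{-\beta-1}\bigl(-\beta F_\beta-zF_\beta'\bigr),\qquad \Delta\Phi_\beta=s^{-\beta-1}\bigl(zF_\beta''+\tfrac N2 F_\beta'\bigr).
\]
Hence (i) is equivalent to the profile ODE $zF_\beta''+(\tfrac N2+z)F_\beta'+\beta F_\beta=0$. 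Substituting $F_\beta=e^{-z}w$ with $w=M(a,c;z)$, $a=\tfrac N2-\beta$, $c=\tfrac N2$, and simplifying (using $\beta-c=-a$), this ODE collapses exactly to Kummer's equation $zw''+(c-z)w'-aw=0$, which $M(a,c;z)$ satisfies by definition. This proves (i).

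For (ii), I would compare $\pa_t\Phi_\beta=s^{-\beta-1}(-\beta F_\beta-zF_\beta')$ with $-\beta\Phi_{\beta+1}=-\beta s^{-\beta-1}F_{\beta+1}$, reducing the claim to the identity $\beta F_{\beta+1}=\beta F_\beta+zF_\beta'$. Written in terms of $w=M(a,c;z)$ with $\beta=c-a$, this is the contiguous relation
\[
zM'(a,c;z)+(c-a-z)M(a,c;z)=(c-a)M(a-1,c;z),
\]
which I would verify by matching the coefficients of $z^m$ in the power series (using $zM'(a,c;z)=\sum_m \frac{m(a)_m}{(c)_m m!}z^m$ together with $(a)_m=(a)_{m-1}(a+m-1)$ and $(a-1)_m=(a-1)(a)_{m-1}$); the coefficient identity then reduces to the elementary algebraic fact $(a+m-1)(m+c-a)-(c+m-1)m=(c-a)(a-1)$. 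This yields (ii).

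For the two-sided bounds, I would use $\bigl(t_0+t+\tfrac{|x|^2}{4}\bigr)^{-\beta}=s^{-\beta}(1+z)^{-\beta}$, so that (iii) and (iv) are equivalent to controlling the single function
\[
G_\beta(z):=(1+z)^{\beta}e^{-z}M\!\left(\tfrac N2-\beta,\tfrac N2;z\right),\qquad z\ge 0;
\]
(iii) asks that $|G_\beta|$ be bounded above and (iv) that $G_\beta$ be bounded below by a positive constant. Since $M$ is entire, $G_\beta$ is continuous on $[0,\infty)$ with $G_\beta(0)=1$, so only the behaviour as $z\to\infty$ matters. I would settle it with the classical asymptotics $M(a,c;z)\sim\frac{\Gamma(c)}{\Gamma(a)}e^{z}z^{a-c}$ as $z\to+\infty$; since $a-c=-\beta$, this gives $G_\beta(z)\to \frac{\Gamma(N/2)}{\Gamma(N/2-\beta)}$, a finite limit for every $\beta\ge 0$ (equal to $0$ when $\tfrac N2-\beta$ is a non-positive integer). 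A continuous function with a finite limit at infinity is bounded, which is (iii). For (iv), in the range $0\le\beta<\tfrac N2$ the parameter $a=\tfrac N2-\beta$ is positive, so every coefficient $\frac{(a)_n}{(c)_n n!}$ is positive and $M>0$ on $[0,\infty)$, whence $G_\beta>0$ pointwise; as its limit $\frac{\Gamma(N/2)}{\Gamma(N/2-\beta)}$ is then strictly positive, $G_\beta$ is bounded below by a positive constant, giving (iv).

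Steps (i)--(ii) are bookkeeping with Kummer's equation and its contiguous relations, and I expect no genuine difficulty beyond keeping the Pochhammer algebra straight. The point requiring care is the large-$z$ analysis underlying (iii)--(iv): one must invoke the correct leading asymptotics of $M(a,c;z)$ on the positive real axis and track how the constant $\frac{\Gamma(N/2)}{\Gamma(N/2-\beta)}$ degenerates (it vanishes when $a$ is a non-positive integer, and is strictly positive precisely when $0\le\beta<\tfrac N2$), which is exactly what separates the universal upper bound (iii) from the restricted lower bound (iv).
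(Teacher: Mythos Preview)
The paper does not supply its own proof of this lemma: it is quoted verbatim from \cite{SoWa4} and simply used as a black box. There is therefore nothing in the present paper to compare your argument against.

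That said, your proposal is a correct and fully self-contained verification. The reduction to the self-similar variable $z=|x|^2/(4(t_0+t))$ and the profile $F_\beta(z)=e^{-z}M(\tfrac N2-\beta,\tfrac N2;z)$ is exactly the right move; the computation leading to Kummer's equation for (i) is accurate, the contiguous relation you isolate for (ii) is the correct one and your power-series check of the coefficient identity $(a+m-1)(m+c-a)-(c+m-1)m=(c-a)(a-1)$ goes through. For (iii)--(iv) your strategy of reducing to boundedness (resp.\ positive lower boundedness) of the single-variable function $G_\beta(z)=(1+z)^\beta e^{-z}M(\tfrac N2-\beta,\tfrac N2;z)$ on $[0,\infty)$ is clean, and the classical large-$z$ asymptotics $M(a,c;z)\sim \frac{\Gamma(c)}{\Gamma(a)}e^{z}z^{a-c}$ on the positive real axis give exactly the finite limit $\Gamma(N/2)/\Gamma(N/2-\beta)$ you state; together with continuity and $G_\beta(0)=1$ this closes both bounds. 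Your observation that the limit is strictly positive precisely when $0\le\beta<\tfrac N2$, and that in this range all series coefficients of $M$ are positive so $G_\beta>0$ pointwise, is the correct mechanism behind the restriction in (iv). The only cosmetic point worth adding in a write-up is the degenerate subcase where $\tfrac N2-\beta$ is a non-positive integer: there $M$ is a polynomial and $G_\beta\to 0$ exponentially, which you mention in passing and which is consistent with (iii).
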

\subsection{Functional inequalities with weights}
In view of Lemma \ref{Phi}, for the same constant $t_0\geq 1$ as $\Phi_\beta$, 
we also introduce 
\[
\Psi(x,t:t_0)=t_0+t+\frac{|x|^2}{4}, 
\quad (x,t)\in \R^N\times [0,\infty).
\]
The following Hardy type inequality with $\Psi$ 
is also needed. 
\begin{lemma}\label{lem:hardy}
Let $\lambda>-\frac{N-2}{2}$.
For every $w\in C_c^1(\Omega)$, 
\begin{equation}\label{hardy}
4K(\lambda)^2
\int_{\Omega}
  |w|^2\Psi^{\lambda-1}
\,dx
\leq 
\int_{\Omega}
  |\nabla w|^2\Psi^{\lambda}
\,dx
\end{equation}
with $K(\lambda)=\min\{\frac{N}{2}+\lambda-1,\frac{N}{2}\}$.
That is, if $N\geq 2$, then \eqref{hardy} holds for every $\lambda>0$ 
and every $w\in H^1_0(\Omega)$ satisfying $w\Psi^{\frac{\lambda}{2}}, (\nabla w)\Psi^{\frac{\lambda}{2}}\in L^2(\Omega)$.
\end{lemma}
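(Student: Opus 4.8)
The plan is to prove a weighted Hardy inequality by an integration-by-parts argument organized around the vector field $\Psi^{\lambda-1}\nabla\Psi$, exploiting that $\Psi$ is essentially a shifted quadratic in $x$. First I would record the three elementary identities that drive everything: since $\Psi=t_0+t+\tfrac{|x|^2}{4}$, one has $\nabla\Psi=\tfrac{x}{2}$, $\Delta\Psi=\tfrac{N}{2}$, and $|\nabla\Psi|^2=\tfrac{|x|^2}{4}=\Psi-(t_0+t)$; in particular $0\le|\nabla\Psi|^2\le\Psi$ pointwise, and $\Psi\ge t_0\ge1$. These are the only structural facts about $\Psi$ that the argument uses.

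It suffices to prove the inequality for $w\in C_c^1(\Omega)$, the extension to the weighted class $w\Psi^{\lambda/2},(\nabla w)\Psi^{\lambda/2}\in L^2(\Omega)$ following by truncation and mollification: multiply by cutoffs $\chi_R$ supported in $|x|\le2R$ and let $R\to\infty$, the error from $\nabla\chi_R=O(1/R)$ being controlled on the shell $R\le|x|\le 2R$ (where $\Psi\sim R^2$) by $\tfrac{1}{R^2}\int|w|^2\Psi^\lambda\sim\int|w|^2\Psi^{\lambda-1}$, which is finite since $\Psi\ge1$ gives $\int|w|^2\Psi^{\lambda-1}\le\int|w|^2\Psi^\lambda<\infty$. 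For $w\in C_c^1(\Omega)$ there are no boundary contributions, so integrating the divergence of $|w|^2\Psi^{\lambda-1}\nabla\Psi$ over $\Omega$ yields the identity
\[
\int_\Omega|w|^2\,\mathrm{div}\!\left(\Psi^{\lambda-1}\nabla\Psi\right)dx=-2\int_\Omega\Psi^{\lambda-1}\,w\,\nabla w\cdot\nabla\Psi\,dx .
\]
Using the identities above, the divergence evaluates to
\[
\mathrm{div}\!\left(\Psi^{\lambda-1}\nabla\Psi\right)=\Big(\tfrac{N}{2}+\lambda-1\Big)\Psi^{\lambda-1}-(\lambda-1)(t_0+t)\Psi^{\lambda-2},
\]
producing the leading coefficient $\tfrac{N}{2}+\lambda-1$ together with a lower-order remainder.

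The next step is to turn the left-hand side into a clean multiple of $\int_\Omega|w|^2\Psi^{\lambda-1}\,dx$, and this is where the two regimes of $\lambda$ (hence the $\min$ in $K(\lambda)$) enter. When $0<\lambda\le1$ the remainder $-(\lambda-1)(t_0+t)\Psi^{\lambda-2}$ is nonnegative and may simply be discarded, leaving coefficient $\tfrac{N}{2}+\lambda-1$; note $\tfrac{N}{2}+\lambda-1>0$ is precisely the hypothesis $\lambda>-\tfrac{N-2}{2}$. When $\lambda\ge1$ the remainder is unfavorable, but $t_0+t\ge1$ gives $\Psi^{\lambda-2}\le(t_0+t)^{-1}\Psi^{\lambda-1}$, so it can be absorbed into the leading term and the effective coefficient drops to $\tfrac{N}{2}$. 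In both cases the effective coefficient is exactly $K(\lambda)=\min\{\tfrac{N}{2}+\lambda-1,\tfrac{N}{2}\}$, so the left side is bounded below by $K(\lambda)\int_\Omega|w|^2\Psi^{\lambda-1}\,dx$.

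Finally I would estimate the right-hand side by Cauchy--Schwarz: using $|\nabla\Psi|\le\Psi^{1/2}$ and the splitting $\Psi^{\lambda-1}|w||\nabla w|\,|\nabla\Psi|\le\big(\Psi^{(\lambda-1)/2}|w|\big)\big(\Psi^{\lambda/2}|\nabla w|\big)$, the right side is at most $2\big(\int_\Omega|w|^2\Psi^{\lambda-1}\big)^{1/2}\big(\int_\Omega|\nabla w|^2\Psi^{\lambda}\big)^{1/2}$. Combining with the lower bound and cancelling one factor of $\big(\int_\Omega|w|^2\Psi^{\lambda-1}\big)^{1/2}$ gives the asserted weighted Hardy inequality, with the constant pinned down by this optimization. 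Equivalently, one can bypass the Cauchy--Schwarz step by completing the square, expanding $\int_\Omega\Psi^\lambda\,|\nabla w+\alpha\Psi^{-1}w\nabla\Psi|^2\,dx\ge0$ and optimizing in $\alpha$ (the choice $\alpha=K(\lambda)/2$ being admissible precisely because $\lambda>-\tfrac{N-2}{2}$), which makes the sharpness of the constant transparent. The main technical point to watch is the sign bookkeeping of the lower-order term across the two ranges of $\lambda$ — this is exactly what forces the $\min$ in the definition of $K(\lambda)$ — together with the weighted density argument needed to pass from $C_c^1(\Omega)$ to the full admissible class.
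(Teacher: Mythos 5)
Your argument is correct and is essentially the paper's own proof: it integrates by parts against the same vector field $\frac{x}{2}\Psi^{\lambda-1}=\Psi^{\lambda-1}\nabla\Psi$, bounds its divergence below by $K(\lambda)\Psi^{\lambda-1}$ through the same two-regime case split (your absorption via $\Psi^{-1}\le (t_0+t)^{-1}$ is just an algebraic rearrangement of the paper's use of $\frac{|x|^2}{4}\le\Psi$), and closes with the identical Cauchy--Schwarz step using $|\nabla\Psi|\le\Psi^{1/2}$, your truncation sketch filling in what the paper dismisses as the standard approximation argument. One caveat, which you inherit from the paper itself: the chain $K(\lambda)\int_\Omega|w|^2\Psi^{\lambda-1}\,dx\le 2\left(\int_\Omega|w|^2\Psi^{\lambda-1}\,dx\right)^{1/2}\left(\int_\Omega|\nabla w|^2\Psi^{\lambda}\,dx\right)^{1/2}$ --- and equally your completing-the-square variant with $\alpha=K(\lambda)/2$ --- yields the constant $\frac{K(\lambda)^2}{4}$, not the $4K(\lambda)^2$ displayed in \eqref{hardy} (which is in fact unattainable, e.g.\ at $\lambda=0$, $N\ge3$, where $\frac{K^2}{4}$ is asymptotically sharp), so the stated constant is a typo in the lemma rather than a gap in your proof.
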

\begin{proof}
Noting that 
\begin{align*}
{\rm div} \left(\frac{x}{2}\Psi^{\lambda-1}\right)
=
\frac{N}{2}\Psi^{\lambda-1}+ (\lambda-1)\frac{|x|^2}{4}\Psi^{\lambda-2}
\geq 
K\Psi^{\lambda-1}
\end{align*}
with $K(\lambda)=\min\{\frac{N}{2}+\lambda-1,\frac{N}{2}\}>0$,
we see from integration by parts and 
H\"older inequality that 
\begin{align*}
K(\lambda)\int_{\R^N}|w|^2\Psi^{\lambda-1}\,dx
&
\leq 
\int_{\R^N}|w|^2{\rm div}\left(\frac{x}{2}\Psi^{\lambda-1}\right)\,dx
\\
&
\leq 
-2\int_{\R^N}w\left(\nabla w\cdot\frac{x}{2}\right)\Psi^{\lambda-1}\,dx
\\
&
\leq 
2
\left(\int_{\R^N}|w|^2\Psi^{\lambda-1}\,dx\right)^{\frac{1}{2}}
\left(\int_{\R^N}|\nabla w|^2\frac{|x|^2}{4}\Psi^{\lambda-1}\,dx\right)^{\frac{1}{2}}
\\
&
\leq 
2
\left(\int_{\R^N}|w|^2\Psi^{\lambda-1}\,dx\right)^{\frac{1}{2}}
\left(\int_{\R^N}|\nabla w|^2\Psi^{\lambda}\,dx\right)^{\frac{1}{2}}.
\end{align*}
The last assertion can be verified by the standard approximation argument 
with the molifier and cut-off functions.
\end{proof}

The following lemma is well-known 
Gagliardo--Nirenberg inequality. 

\begin{lemma}[Gagliardo--Nirenberg inequality]
\label{lem:GNineq}
If $1<p\leq \frac{N+2}{N-2}$, then there exists a constant $C_{GN,p}>0$ such that 
for every $w\in H^1_0(\Omega)$, 
\begin{align*}
\|w\|_{L^{p+1}(\Omega)}\leq 
C_{GN,p}
\|w\|_{L^2(\Omega)}^{1-\frac{N(p-1)}{2(p+1)}}
\|\nabla w\|_{L^2(\Omega)}^{\frac{N(p-1)}{2(p+1)}}.
\end{align*}
\end{lemma}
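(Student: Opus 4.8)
The plan is to reduce the estimate on the exterior domain $\Omega$ to the classical Gagliardo--Nirenberg inequality on all of $\R^N$, and then to obtain the latter by interpolating between $L^2$ and the critical Sobolev space. First I would observe that for $w\in H^1_0(\Omega)$ the trivial extension $\tilde w$, defined by $\tilde w=w$ on $\Omega$ and $\tilde w=0$ on $\R^N\setminus\Omega$, lies in $H^1(\R^N)$: choosing $w_n\in C_c^\infty(\Omega)$ with $w_n\to w$ in $H^1(\Omega)$, the extensions $\tilde w_n\in C_c^\infty(\R^N)$ form a Cauchy sequence in $H^1(\R^N)$ whose limit is $\tilde w$. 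Since $\tilde w$ vanishes outside $\Omega$, one has $\|\tilde w\|_{L^r(\R^N)}=\|w\|_{L^r(\Omega)}$ for every $r\in[1,\infty]$ and $\|\nabla\tilde w\|_{L^2(\R^N)}=\|\nabla w\|_{L^2(\Omega)}$. Thus it suffices to prove the inequality for $\tilde w$ on $\R^N$, and the resulting constant is independent of $\Omega$.

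For $N\ge 3$ I would complete the proof as follows. Set $2^*=\frac{2N}{N-2}$. The Sobolev embedding provides a constant $C_S>0$ with $\|v\|_{L^{2^*}(\R^N)}\le C_S\|\nabla v\|_{L^2(\R^N)}$ for all $v\in H^1(\R^N)$. The hypothesis $1<p\le\frac{N+2}{N-2}$ is precisely the condition $2<p+1\le 2^*$, so there is a unique $\theta\in(0,1]$ determined by
\[
\frac{1}{p+1}=\frac{1-\theta}{2}+\frac{\theta}{2^*},
\]
and solving this relation gives $\theta=\frac{N(p-1)}{2(p+1)}$, exactly the exponent appearing in the statement. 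Hölder's inequality between the Lebesgue spaces $L^2$ and $L^{2^*}$ then yields $\|\tilde w\|_{L^{p+1}}\le\|\tilde w\|_{L^2}^{1-\theta}\|\tilde w\|_{L^{2^*}}^{\theta}$, and bounding the last factor by $C_S\|\nabla\tilde w\|_{L^2}$ produces the claim with $C_{GN,p}=C_S^{\theta}$. Transferring back to $\Omega$ through the norm identities of the previous paragraph finishes this case.

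The only genuinely separate case, and the main obstacle, is $N=2$, where $\frac{N+2}{N-2}$ is read as $+\infty$, the admissible range is $p+1\in(2,\infty)$, and the critical embedding used above is unavailable. Here I would first establish Ladyzhenskaya's inequality $\|v\|_{L^4(\R^2)}\le C'\|v\|_{L^2(\R^2)}^{1/2}\|\nabla v\|_{L^2(\R^2)}^{1/2}$ by applying a planar estimate $\|u\|_{L^2(\R^2)}\le C\|\nabla u\|_{L^1(\R^2)}$ (valid for $u\in C_c^1(\R^2)$) to $u=v^2$ and using the Cauchy--Schwarz inequality on $\int|v|\,|\nabla v|\,dx$. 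The full range $q=p+1\in(2,\infty)$ then follows: for $2<q\le 4$ by Hölder interpolation of $L^q$ between $L^2$ and $L^4$ combined with Ladyzhenskaya, and for $q>4$ by a finite iteration of the same power-substitution scheme (applied to $|\tilde w|^{q/2}$, whereby the resulting intermediate norm $\|\tilde w\|_{L^{q-2}}$, with $2\le q-2<q$, is interpolated between $L^2$ and $L^q$ and absorbed). This low-dimensional bootstrap is where the argument departs from the clean $N\ge 3$ interpolation; alternatively, for $N=2$ one may simply invoke the classical Gagliardo--Nirenberg inequality of Nirenberg. Assembling the two cases together with the reduction of the first paragraph yields the lemma for all $N\ge 2$.
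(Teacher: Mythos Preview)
The paper does not actually prove this lemma; it is introduced as ``well-known'' and stated without argument. Your proof is correct and supplies the details the paper omits: the zero-extension reduces the exterior-domain statement to $\R^N$ with a domain-independent constant, the case $N\ge 3$ follows from Sobolev embedding into $L^{2^*}$ together with $L^2$--$L^{2^*}$ interpolation (and your computation of $\theta=\frac{N(p-1)}{2(p+1)}$ is right), and for $N=2$ the Ladyzhenskaya inequality plus the power-substitution bootstrap is a valid route. The only small point worth making explicit in the $N=2$ bootstrap is that the absorption of $\|\tilde w\|_{L^q}$ on the right requires that this quantity be finite a priori; this is handled by first running the argument for $\tilde w\in C_c^\infty(\R^2)$ and then passing to the limit in $H^1$.
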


Next we give a weight version of Gagliardo-Nirenberg inequality, 
which we will exactly need in the treatment of nonlinear term $f(u)$ in \eqref{eq:ndw}. 

\begin{lemma}\label{lem:GNtype}
If $N\geq 2$ and $1<p\leq \frac{N+2}{N-2}$ and $\lambda>0$, then there exists a constant $\widetilde{C}_p>0$ such that 
for every $w\in H^1_0(\Omega)$ satisfying $v\Psi^{\lambda},(\nabla v)\Psi^\lambda\in L^2(\Omega)$, 
\begin{align*}
\|v\Psi^{\frac{\lambda}{p+1}}\|_{L^{p+1}(\Omega)}
\leq 
\widetilde{C}_p
(t_0+t)^{-\frac{\lambda(p-1)}{2(p+1)}}
\|v\Psi^{\frac{\lambda}{2}}\|_{L^2(\Omega)}^{1-\frac{N(p-1)}{2(p+1)}}
\|(\nabla v)\Psi^{\frac{\lambda}{2}}\|_{L^2(\Omega)}^{\frac{N(p-1)}{2(p+1)}}.
\end{align*}
\end{lemma}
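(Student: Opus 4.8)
The plan is to reduce the weighted inequality to the unweighted Gagliardo–Nirenberg inequality (Lemma 2.7) applied to the function $w = v\Psi^{\lambda/2}$, and then carefully control the derivative terms produced by differentiating the weight $\Psi^{\lambda/2}$. First I would set $w = v\Psi^{\lambda/2}$ and observe that $\|v\Psi^{\lambda/(p+1)}\|_{L^{p+1}} = \|w\|_{L^{p+1}}$, since $\Psi^{\lambda/2}$ raised to the appropriate power matches the weight on the left-hand side (indeed $(p+1)\cdot\tfrac{\lambda}{p+1} = (p+1)\cdot\tfrac{\lambda}{2}\cdot\tfrac{2}{p+1}$ after pulling $\Psi^{\lambda/2}$ inside the $L^{p+1}$ norm). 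Applying Lemma 2.7 to $w$ then gives
\[
\|v\Psi^{\frac{\lambda}{p+1}}\|_{L^{p+1}(\Omega)}
\leq C_{GN,p}
\|v\Psi^{\frac{\lambda}{2}}\|_{L^2(\Omega)}^{1-\theta}
\|\nabla(v\Psi^{\frac{\lambda}{2}})\|_{L^2(\Omega)}^{\theta},
\qquad \theta = \tfrac{N(p-1)}{2(p+1)}.
\]

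The main obstacle is the gradient term $\nabla(v\Psi^{\lambda/2}) = (\nabla v)\Psi^{\lambda/2} + \tfrac{\lambda}{2}v\Psi^{\lambda/2-1}\nabla\Psi$, since the second summand is not directly one of the quantities appearing on the right-hand side. Here I would use that $\nabla\Psi = \tfrac{x}{2}$, so $|\nabla\Psi|^2 = \tfrac{|x|^2}{4} \leq \Psi$, which yields the pointwise bound $|v\Psi^{\lambda/2-1}\nabla\Psi| \leq |v|\Psi^{\lambda/2-1/2}$. The term $\|v\Psi^{(\lambda-1)/2}\|_{L^2}^2 = \int_\Omega |v|^2\Psi^{\lambda-1}\,dx$ is precisely the left-hand side of the Hardy inequality (Lemma 2.6), so it is controlled by $\tfrac{1}{4K(\lambda)^2}\int_\Omega |\nabla v|^2\Psi^\lambda\,dx = \tfrac{1}{4K(\lambda)^2}\|(\nabla v)\Psi^{\lambda/2}\|_{L^2}^2$. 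Combining these estimates gives
\[
\|\nabla(v\Psi^{\frac{\lambda}{2}})\|_{L^2(\Omega)}
\leq \Big(1 + \tfrac{\lambda}{4K(\lambda)}\Big)\|(\nabla v)\Psi^{\frac{\lambda}{2}}\|_{L^2(\Omega)},
\]
absorbing the weight-derivative term back into the admissible gradient norm.

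This produces the correct homogeneity in $\|v\Psi^{\lambda/2}\|_{L^2}$ and $\|(\nabla v)\Psi^{\lambda/2}\|_{L^2}$, but not yet the explicit decay factor $(t_0+t)^{-\lambda(p-1)/(2(p+1))}$. To recover it, I would exploit the interpolation structure more carefully: the factor must come from a \emph{gain} of negative powers of $\Psi \geq t_0 + t$ somewhere in the estimate. The cleanest route is to redistribute the weight before applying Lemma 2.7, writing $\Psi^{\lambda/(p+1)} = \Psi^{\lambda/2 \cdot (1-\theta')} \cdot \Psi^{\lambda/2 \cdot \theta'}$ with the exponents matched to the interpolation parameter $\theta$, so that when Lemma 2.7's exponents $1-\theta$ and $\theta$ are applied, the weights on the $L^2$ and gradient factors come out as exactly $\Psi^{\lambda/2}$ while a surplus power $\Psi^{-\lambda(p-1)/(2(p+1))}$ is left over; since $\Psi \geq t_0 + t$, this surplus is bounded by $(t_0+t)^{-\lambda(p-1)/(2(p+1))}$. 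I expect the bookkeeping of exponents to be the delicate part — verifying that the arithmetic $\tfrac{\lambda}{p+1}$, $\theta = \tfrac{N(p-1)}{2(p+1)}$, and the target decay exponent are mutually consistent — whereas the Hardy absorption and the appeal to Lemma 2.7 are routine once the splitting is fixed.
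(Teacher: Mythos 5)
Your underlying mechanism is the paper's: apply the unweighted Gagliardo--Nirenberg inequality (Lemma \ref{lem:GNineq}) to a weighted function, absorb the weight-derivative term via the Hardy inequality (Lemma \ref{lem:hardy}), and extract the decay factor from $\Psi^{-1}\leq (t_0+t)^{-1}$. But two steps in your write-up need repair. First, the claimed identity $\|v\Psi^{\frac{\lambda}{p+1}}\|_{L^{p+1}}=\|v\Psi^{\frac{\lambda}{2}}\|_{L^{p+1}}$ is false for $p>1$: the left side integrates $|v|^{p+1}\Psi^{\lambda}$, the right side $|v|^{p+1}\Psi^{\frac{\lambda(p+1)}{2}}$, and your arithmetic inserts an unexplained factor $\frac{2}{p+1}$. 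What is true (since $t_0\geq 1$ gives $\Psi\geq 1$) is only the one-sided bound $\|v\Psi^{\frac{\lambda}{p+1}}\|_{L^{p+1}}\leq \|v\Psi^{\frac{\lambda}{2}}\|_{L^{p+1}}$, and that route yields, as you yourself note, the estimate without the decay factor. Second, the proposed ``redistribution'' $\Psi^{\frac{\lambda}{p+1}}=\Psi^{\frac{\lambda}{2}(1-\theta')}\cdot\Psi^{\frac{\lambda}{2}\theta'}$ does not parse as stated: Lemma \ref{lem:GNineq} is applied to a single function $w$, and both right-hand factors carry that same $w$, so there is no freedom to assign different weights to the $L^2$ factor and the gradient factor before interpolating.

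The correct instantiation of your plan --- and it is exactly the paper's proof --- is to take $w=v\Psi^{\frac{\lambda}{p+1}}$ from the outset, so the left-hand side is the target norm with no manipulation. The derivative term is then $\frac{\lambda}{p+1}\|v\Psi^{\frac{\lambda}{p+1}-\frac{1}{2}}\|_{L^2}$, and your Hardy absorption must be run at the exponent $\frac{2\lambda}{p+1}$ rather than $\lambda$ (legitimate for $N\geq 2$ and $\lambda>0$, since $K(\frac{2\lambda}{p+1})>0$), giving $\|\nabla(v\Psi^{\frac{\lambda}{p+1}})\|_{L^2}\leq C\|(\nabla v)\Psi^{\frac{\lambda}{p+1}}\|_{L^2}$. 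Finally, since $\frac{\lambda}{p+1}=\frac{\lambda}{2}-\frac{\lambda(p-1)}{2(p+1)}$, the pointwise bound $\Psi^{\frac{\lambda}{p+1}}\leq (t_0+t)^{-\frac{\lambda(p-1)}{2(p+1)}}\Psi^{\frac{\lambda}{2}}$ upgrades each right-hand norm, and because the interpolation exponents satisfy $(1-\theta)+\theta=1$, the prefactor $(t_0+t)^{-\frac{\lambda(p-1)}{2(p+1)}}$ appears exactly once. So your surplus-power heuristic is correct, but the surplus is harvested from the right-hand norms \emph{after} applying Gagliardo--Nirenberg, not from a splitting of the weight beforehand; with these corrections your argument coincides with the paper's.
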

\begin{proof}
Note that by assumption, we have
$v\Psi^{\frac{\lambda}{p+1}}\in H_0^1(\Omega)$. Therefore 
applying Lemma \ref{lem:GNineq} to $w=v\Psi^{\frac{\lambda}{p+1}}$ 
and using $|\nabla \Psi|\leq \Psi^{\frac{1}{2}}$
imply
\begin{align*}
\|v\Psi^{\frac{\lambda}{p+1}}\|_{L^{p+1}(\Omega)}
&
\leq 
C_{GN,p}
\|v\Psi^{\frac{\lambda}{p+1}}\|_{L^2(\Omega)}^{1-\frac{N(p-1)}{2(p+1)}}
\|\nabla (v\Psi^{\frac{\lambda}{p+1}})\|_{L^2(\Omega)}^{\frac{N(p-1)}{2(p+1)}}.
\\
&
\leq 
C_{GN,p}
\|v\Psi^{\frac{\lambda}{p+1}}\|_{L^2(\Omega)}^{1-\frac{N(p-1)}{2(p+1)}}
\left(
\|(\nabla v)\Psi^{\frac{\lambda}{p+1}}\|_{L^2(\Omega)}
+
\frac{\lambda}{p+1}\|v\Psi^{\frac{\lambda}{p+1}-\frac{1}{2}}\|_{L^2(\Omega)}
\right)^{\frac{N(p-1)}{2(p+1)}}.
\end{align*}
Combining the above inequality with Lemma \ref{lem:hardy} with $v$ and $\frac{\lambda}{p+1}$, we have
\begin{align*}
\|v\Psi^{\frac{\lambda}{p+1}}\|_{L^{p+1}(\Omega)}
\leq 
C_{GN,p}\left(1+\frac{\lambda K(\frac{\lambda}{p+1})}{p+1}\right)^{\frac{N(p-1)}{2(p+1)}}
\|v\Psi^{\frac{\lambda}{p+1}}\|_{L^2(\Omega)}^{1-\frac{N(p-1)}{2(p+1)}}
\|(\nabla v)\Psi^{\frac{\lambda}{p+1}}\|_{L^2(\Omega)}^{\frac{N(p-1)}{2(p+1)}}.
\end{align*}
Using the inequality $\Psi^{-1}\leq (t_0+t)^{-1}$, we deduce the 
desired inequality. 
\end{proof}

Thirdly, 
we give an inequality related to 
integration by parts formula with non-uniform weight. 
Although its proof is essentially stated in \cite{SoWa4}, 
we would give a proof for reader's convenience. 

\begin{lemma}\label{lem:ibp}
Assume that $\Phi\in C^2(\overline{\Omega})$ is positive and $\delta\in (0,\frac{1}{2})$. 
Then for every $u\in H^1_0(\Omega)$ having a compact support,
\[
\int_{\Omega}
u\Delta u
\Phi^{-1+2\delta}
\,dx
\leq 
-
\frac{\delta}{1-\delta}
\int_{\Omega}
|\nabla u|^2
\Phi^{-1+2\delta}
\,dx
+
\frac{1-2\delta}{2}
\int_{\R^N}
u^2(\Delta \Phi)
\Phi^{-2+2\delta}
\,dx
\]
\end{lemma}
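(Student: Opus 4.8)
The plan is to reduce the whole estimate, by two integrations by parts, to controlling a single cross term, and then to close the argument with Young's inequality tuned to one special parameter. First I would integrate by parts once. Since $u\in H^1_0(\Omega)$ has compact support, the contribution at infinity vanishes and the boundary integral over $\pa\Omega$ drops because it carries a factor $u=0$; this yields the exact identity
\[
\int_\Omega u\Delta u\,\Phi^{-1+2\delta}\,dx
=-\int_\Omega|\nabla u|^2\Phi^{-1+2\delta}\,dx
+(1-2\delta)\int_\Omega u(\nabla u\cdot\nabla\Phi)\Phi^{-2+2\delta}\,dx,
\]
so the problem is reduced to bounding from above the cross term $A:=\int_\Omega u(\nabla u\cdot\nabla\Phi)\Phi^{-2+2\delta}\,dx$. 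Writing $u(\nabla u\cdot\nabla\Phi)=\tfrac12\nabla(u^2)\cdot\nabla\Phi$ and integrating by parts a second time (again the boundary terms disappear because of the factor $u^2$, and here the hypothesis $\Phi\in C^2(\overline\Omega)$ is used), I would obtain the second identity
\[
A=-\frac12\int_\Omega u^2(\Delta\Phi)\Phi^{-2+2\delta}\,dx
+(1-\delta)\int_\Omega u^2|\nabla\Phi|^2\Phi^{-3+2\delta}\,dx.
\]

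Abbreviating $B:=\int_\Omega|\nabla u|^2\Phi^{-1+2\delta}\,dx\ge0$, $C:=\int_\Omega u^2(\Delta\Phi)\Phi^{-2+2\delta}\,dx$ and $D:=\int_\Omega u^2|\nabla\Phi|^2\Phi^{-3+2\delta}\,dx\ge0$, the two relations read $A=-\tfrac12C+(1-\delta)D$ and $\int u\Delta u\,\Phi^{-1+2\delta}=-B+(1-2\delta)A$. The single estimate I still need is $A\le\frac1{1-\delta}B+\frac12C$: substituting it into the latter relation produces the coefficient $-1+\frac{1-2\delta}{1-\delta}=-\frac{\delta}{1-\delta}$ in front of $B$ and $\frac{1-2\delta}{2}$ in front of $C$, which is exactly the claimed inequality (the step is valid since $\delta\in(0,\tfrac12)$ gives $1-2\delta>0$). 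To establish $A\le\frac1{1-\delta}B+\frac12C$ I would bound $A$ by Young's inequality, $A\le\frac a2B+\frac1{2a}D$ for arbitrary $a>0$, and then eliminate the unwanted $D$ by inserting $D=\frac1{1-\delta}\big(A+\tfrac12C\big)$ from the second identity; solving the resulting inequality for $A$ and choosing $a=\frac1{1-\delta}$ collapses the coefficient of $C$ to exactly $\frac12$ and that of $B$ to exactly $\frac1{1-\delta}$.

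The delicate point is not any individual estimate but the bookkeeping of signs: the nonnegative term $D$ carrying $|\nabla\Phi|^2$ enters with a positive sign and so cannot be discarded; it must be cancelled algebraically between the exact second integration-by-parts identity and the Young step, and the unique admissible value $a=\frac1{1-\delta}$ is precisely what forces the stated constants $\frac{\delta}{1-\delta}$ and $\frac{1-2\delta}{2}$. I would also note at the outset that, extending $u$ by zero outside $\Omega$, the term $\int_\Omega u^2(\Delta\Phi)\Phi^{-2+2\delta}$ coincides with the $\int_{\R^N}$ written in the statement, and that $0<\delta<\tfrac12$ guarantees $1-\delta>0$ and $1-2\delta>0$, so that every division performed and every sign of an inequality used in the argument is legitimate.
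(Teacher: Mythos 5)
Your proof is correct. The two integration-by-parts identities you use are exactly the ones underlying the paper's argument (all boundary terms vanish for the reasons you give), the absorption step is legitimate because every integral is finite ($\Phi$ is bounded above and below by positive constants on the compact support of $u$, and $\nabla\Phi,\Delta\Phi$ are bounded there), and at your choice $a=\tfrac{1}{1-\delta}$ the coefficient $1-\tfrac{1}{2a(1-\delta)}=\tfrac12>0$, so solving for $A$ is valid regardless of the sign of $C$. The one place you diverge from the paper is in how the cross term is controlled: the paper substitutes $v=\Phi^{-1+\delta}u$, expands $\int_\Omega|\nabla u|^2\Phi^{-1+2\delta}\,dx$ into $\int_\Omega\Phi|\nabla v|^2\,dx$ plus lower-order terms, discards the nonnegative term $\int_\Omega\Phi|\nabla v|^2\,dx$ to get the lower bound $B\ge-(1-\delta)C+(1-\delta)^2D$ (in your notation), and inserts this into the fully expanded identity $\int_\Omega u\Delta u\,\Phi^{-1+2\delta}\,dx=-B-\tfrac{1-2\delta}{2}C+(1-\delta)(1-2\delta)D$. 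Modulo your second identity $A=-\tfrac12C+(1-\delta)D$, the paper's lower bound is algebraically identical to your Young step at $a=\tfrac{1}{1-\delta}$: both amount to $A\le\tfrac{1}{2(1-\delta)}B+\tfrac{1-\delta}{2}D$, the discarded $\int_\Omega\Phi|\nabla v|^2\,dx$ being precisely the square defect in Cauchy--Schwarz/Young at that parameter. So the two proofs are equivalent in content; yours makes the optimization over the Young parameter explicit and explains why $a=\tfrac{1}{1-\delta}$ is forced by the target constants $\tfrac{\delta}{1-\delta}$ and $\tfrac{1-2\delta}{2}$, while the paper's substitution packages the same cancellation as completing a square, which in addition exhibits exactly what is thrown away and would even give the slightly stronger inequality retaining $\int_\Omega\Phi|\nabla v|^2\,dx$. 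One cosmetic remark: as in the paper, the statement pairs $u\in H^1_0(\Omega)$ with the expression $\int_\Omega u\Delta u\,\Phi^{-1+2\delta}\,dx$, so strictly one should either assume $u\in H^2$ (as holds for the solutions to which the lemma is applied) or read the first identity as the definition of the left-hand side; your proof, like the paper's, is unaffected by this.
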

\begin{proof}
Set $v=\Phi^{-1+\delta}u$. Then we have
\begin{align*}
\int_{\Omega}
|\nabla u|^2
\Phi^{-1+2\delta}
\,dx
=
\int_{\Omega}
|\nabla v|^2
\Phi
\,dx
+2(1-\delta)
\int_{\Omega}
v(\nabla v\cdot\nabla \Phi)
\,dx
+
(1-\delta)^2\int_{\Omega}
v^2\frac{|\nabla \Phi|^2}{\Phi}
\,dx.
\end{align*}
We see from integration by parts that  
\[
\int_{\Omega}
|\nabla u|^2
\Phi^{-1+2\delta}
\,dx
\geq 
-(1-\delta)
\int_{\Omega}
u^2(\Delta \Phi)\Phi^{-2+2\delta}
\,dx
+
(1-\delta)^2\int_{\Omega}u^2|\nabla \Phi|^2\Phi^{-3+2\delta}
\,dx.
\]
Using the above inequality with integration by parts twice, we deduce
\begin{align*}
\int_{\Omega}
u\Delta u
\Phi^{-1+2\delta}
\,dx
&
=
-
\int_{\Omega}
|\nabla u|^2
\Phi^{-1+2\delta}
\,dx
+
(1-2\delta)\int_{\Omega}
u(\nabla u\cdot\nabla\Phi)
\Phi^{-2+2\delta}
\,dx
\\
&
=
-
\int_{\Omega}
|\nabla u|^2
\Phi^{-1+2\delta}
\,dx
-\frac{1-2\delta}{2}\int_{\Omega}
u^2(\Delta\Phi)
\Phi^{-2+2\delta}
\,dx
\\
&\quad
+
(1-\delta)(1-2\delta)
\int_{\Omega}
u^2|\nabla\Phi|^2\Phi^{-3+2\delta}
\,dx
\\
&\leq 
\left(
\frac{1-2\delta}{1-\delta}-1\right)
\int_{\Omega}
|\nabla u|^2
\Phi^{-1+2\delta}
\,dx
+\frac{1-2\delta}{2}\int_{\Omega}
u^2(\Delta\Phi)
\Phi^{-2+2\delta}
\,dx.
\end{align*}
The proof is complete. 
\end{proof}

\section{Proof of main theorem (Theorem \ref{main})}\label{sec:wee}

Since the weak solution $u$ of \eqref{eq:ndw} can be 
approximated by the one with smooth compactly supported initial data, 
in this section 
we may assume that $u_0\in H^2(\Omega)\cap H^1_0(\Omega)$ and $u_1\in H^1_0(\Omega)$ are compactly supported 
without loss of generality. 
By finite propagation property, we also can assume that 
the solution $u(t)\in H^2(\Omega)\cap H^1_0(\Omega)$ $(t\geq 0)$ 
is also compactly supported for every $t\in [0,T_*)$. 

The proof of Theorem \ref{main} 
is based on the following proposition 
which is well-known, and so-called blowup alternative. 

\begin{proposition}\label{prop:alt}
Assume that $f$ satisfies \eqref{ass.f} with $1<p\leq \frac{N}{N-2}$. 
Let $u$ be the weak solution of \eqref{eq:ndw} in $(0,T_*)$ with the corresponding lifespan $T_*$. 
If $T_*<\infty$, then one has 
\[
\lim_{t\uparrow T_*}(\|u\|_{H^1_0(\Omega)}+\|u\|_{L^2(\Omega)})=\infty.
\]
\end{proposition}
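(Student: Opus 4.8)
The plan is to prove the assertion in two stages: first the \emph{standard} blow-up alternative for the full phase-space norm $\|U(t)\|_{\mathcal H}=\|u(t)\|_{H^1_0}+\|\pa_tu(t)\|_{L^2}$, and then a pointwise-in-time energy bound that transfers the blow-up from the velocity component $\pa_tu$ to the displacement $u$ alone, which is exactly what the stated norm $\|u\|_{H^1_0}+\|u\|_{L^2}$ measures. The whole difficulty is that the stated norm does \emph{not} see $\pa_tu$, so the textbook alternative is not enough by itself.

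Stage 1 (phase-space alternative). I would first establish $\lim_{t\uparrow T_*}\|U(t)\|_{\mathcal H}=\infty$ when $T_*<\infty$ by the usual continuation argument built on Proposition \ref{prop:LWP}. The local existence time there depends only on $N$, $p$, the constant in \eqref{ass.f}, and an upper bound for $\|u_0\|_{H^1}+\|u_1\|_{L^2}$, hence is bounded below by some $\tau(R)>0$ on $\{\|U_0\|_{\mathcal H}\le R\}$. If $\liminf_{t\uparrow T_*}\|U(t)\|_{\mathcal H}=R_0<\infty$, choose $t_n\uparrow T_*$ with $\|U(t_n)\|_{\mathcal H}\le R_0+1$; solving \eqref{eq:ndw} with data $U(t_n)$ extends the solution by $\tau(R_0+1)$, and for $n$ large $t_n+\tau(R_0+1)>T_*$, contradicting the maximality of $T_*$. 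Thus $\|U(t)\|_{\mathcal H}\to\infty$.

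Stage 2 (energy bound transferring the blow-up). This is the crucial point, and it is where $p\le\frac{N}{N-2}$ enters. As arranged at the start of Section \ref{sec:wee}, I may assume $u(t)$ smooth and compactly supported, so the energy computation is legitimate (the general case follows by the approximation already set up there). Writing $F(\xi)=\int_0^\xi f(r)\,dr$ and testing \eqref{eq:ndw} against $\pa_tu$, the boundary term vanishes since $\pa_tu=0$ on $\pa\Omega$, giving the dissipation identity
\[
\frac{d}{dt}\Big[\tfrac12\|\pa_tu\|_{L^2}^2+\tfrac12\|\nabla u\|_{L^2}^2-\int_\Omega F(u)\,dx\Big]=-\|\pa_tu\|_{L^2}^2\le 0 .
\]
Integrating in time and using $|F(\xi)|\le\frac{C_f}{p+1}|\xi|^{p+1}$ (an immediate consequence of \eqref{ass.f} with $\eta=0$), I obtain the \emph{pointwise} estimate
\[
\tfrac12\|\pa_tu(t)\|_{L^2}^2+\tfrac12\|\nabla u(t)\|_{L^2}^2
\le \mathcal E_0+\frac{C_f}{p+1}\|u(t)\|_{L^{p+1}(\Omega)}^{p+1},
\]
where $\mathcal E_0$ collects the initial data. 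Since $p\le\frac{N}{N-2}$ forces $p+1\le\frac{2N}{N-2}$, the Sobolev embedding contained in Lemma \ref{lem:GNineq} yields $\|u(t)\|_{L^{p+1}}\le C(\|u(t)\|_{L^2}+\|\nabla u(t)\|_{L^2})$, and therefore, with $N(t):=\|u(t)\|_{H^1_0}+\|u(t)\|_{L^2}$,
\[
\|\pa_tu(t)\|_{L^2}\le C\big(1+N(t)^{(p+1)/2}\big).
\]

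Finally, combining the two stages gives $\|U(t)\|_{\mathcal H}=\|u(t)\|_{H^1_0}+\|\pa_tu(t)\|_{L^2}\le C\big(1+N(t)^{(p+1)/2}\big)$, and since the left-hand side tends to $\infty$ as $t\uparrow T_*$ by Stage 1 (using $(p+1)/2>1$), it follows that $N(t)\to\infty$, which is precisely the claimed statement. The main obstacle — and the reason this is more than the routine alternative — is Stage 2: one must bound the velocity $\pa_tu$ by the displacement $u$ \emph{at each fixed time}, which is possible only because $f$ admits a potential $F$ whose integral is subcritically controlled by $\|u\|_{H^1_0}$, i.e. exactly under $p\le\frac{N}{N-2}$. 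In writing this up I would pay attention to the borderline Sobolev exponent $p+1\le\frac{2N}{N-2}$ and to justifying the energy identity for merely weak solutions through the approximation scheme of Section \ref{sec:wee}.
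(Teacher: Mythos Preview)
The paper does not actually prove this proposition: it is introduced as ``well-known, and so-called blowup alternative'' and no argument is supplied. So there is nothing to compare against, and your task reduces to producing a self-contained proof.

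Your two-stage argument is correct. Stage~1 is the standard continuation argument, and you have stated it accurately: Proposition~\ref{prop:LWP} gives a local existence time depending only on an upper bound for the $\mathcal H$-norm of the data, which forces $\liminf_{t\uparrow T_*}\|U(t)\|_{\mathcal H}=\infty$ (hence the full limit). Stage~2 is the substantive observation: as written, the proposition asserts blow-up of $\|u\|_{H^1_0}+\|u\|_{L^2}$, which does not see $\pa_tu$, so one really does need the energy identity and the bound $\|\pa_tu(t)\|_{L^2}\le C\big(1+N(t)^{(p+1)/2}\big)$ to close the argument. Your use of $|F(\xi)|\le\frac{C_f}{p+1}|\xi|^{p+1}$ together with Lemma~\ref{lem:GNineq} (valid since $p+1\le\frac{2N-2}{N-2}<\frac{2N}{N-2}$) is exactly right, and the approximation by smooth compactly supported data from the start of Section~\ref{sec:wee} justifies the energy computation. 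One cosmetic point: in the final line you should write $\|U(t)\|_{\mathcal H}\le N(t)+C\big(1+N(t)^{(p+1)/2}\big)\le C'\big(1+N(t)^{(p+1)/2}\big)$ rather than suppressing the $N(t)$ term, but this does not affect the conclusion.

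It is worth remarking that the norm in the stated proposition is slightly unusual (one might expect $\|u\|_{H^1_0}+\|\pa_tu\|_{L^2}$, for which Stage~1 alone would suffice), and you were right to flag and address this rather than silently proving the easier version.
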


For $t_0\geq 1$ and $\lambda>0$, define
the following weighted energy functional 
for the weak solution $u$ as follows:
\begin{equation}
\label{eq:E_lambda_1}
E_\lambda(t:t_0)
:=
(t_0+t)
\int_{\Omega}\Big(|\nabla u(x,t)|^2+|\pa_tu(x,t)|^2\Big)\Psi(x,t:t_0)^{\lambda}\,dx, 
\quad 
t\geq 0. 
\end{equation}
Then the following lemma holds. 
\begin{lemma}\label{lem:energy-1}
Let $E_\lambda$ be given in \eqref{eq:E_lambda_1}. 
Then for every $t_0\geq 1$ and $\lambda\geq 0$, 
\begin{align*}
\frac{d}{dt}E_\lambda(t:t_0)
&\leq 
(\lambda^2+\lambda+1)\int_{\Omega}|\nabla u|^2\Psi^{\lambda}\,dx
+
(\lambda+1-t_0-t)\int_{\Omega}|\pa_tu|^2\Psi^{\lambda}\,dx
\\
&\quad+
\frac{d}{dt}
\left[
2(t_0+t)\int_{\Omega}
F(u)\Psi^{\lambda}
\,dx
\right]
+
2(\lambda+1)\int_{\Omega}
F(u)\Psi^{\lambda}
\,dx, \quad\text{a.a.}\ t\in (0,T_*), 
\end{align*}
where $F(\xi)=\int_{0}^\xi f(\eta)\,d\eta$ for $\xi\in \R$.
\end{lemma}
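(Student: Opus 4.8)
The plan is to differentiate $E_\lambda$ directly and reorganize the resulting terms by means of the equation. Thanks to the reduction to smooth, compactly supported solutions made at the start of this section, every integration by parts below is legitimate and produces no boundary contribution (the Dirichlet condition $u=0$ on $\pa\Omega$ gives $\pa_t u=0$ there as well). Writing $E_\lambda=(t_0+t)I$ with $I=\int_\Omega(|\nabla u|^2+|\pa_t u|^2)\Psi^\lambda\,dx$, I first record the elementary facts $\pa_t\Psi=1$, $\nabla\Psi=\tfrac{x}{2}$, $\Delta\Psi=\tfrac N2$ and $|\nabla\Psi|^2=\tfrac{|x|^2}{4}=\Psi-(t_0+t)$, which will be used repeatedly. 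Differentiating, $\frac{d}{dt}E_\lambda=I+(t_0+t)\frac{dI}{dt}$, and the first summand already supplies the $+1$ that appears in both coefficients on the right-hand side.

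To compute $\frac{dI}{dt}$ I would differentiate under the integral sign, use $\pa_t\Psi=1$ to produce the factor $\lambda\Psi^{\lambda-1}$, and integrate the cross term $2\int_\Omega\nabla u\cdot\nabla(\pa_t u)\,\Psi^\lambda\,dx$ by parts. Substituting $\Delta u=\pa_t^2u+\pa_t u-f(u)$ from the equation is the key structural step: the resulting $\pa_t u\,\pa_t^2 u\,\Psi^\lambda$ term cancels exactly against the $\pa_t u\,\pa_t^2 u$ term coming from differentiating $|\pa_t u|^2$. After this cancellation, $\frac{dI}{dt}$ consists of a dissipation term $-2\int_\Omega|\pa_t u|^2\Psi^\lambda$, a gradient/time cross term $-2\lambda\int_\Omega \pa_t u\,\Psi^{\lambda-1}(\nabla\Psi\cdot\nabla u)\,dx$, the weight-derivative term $\lambda\int_\Omega(|\nabla u|^2+|\pa_t u|^2)\Psi^{\lambda-1}$, and the nonlinear term $2\int_\Omega f(u)\pa_t u\,\Psi^\lambda$.

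The technical heart is to estimate the $(t_0+t)$-multiples of the cross term and of the $\Psi^{\lambda-1}$ term so that the coefficients come out exactly as stated. For the cross term I would apply Young's inequality, splitting $2\lambda(t_0+t)|\pa_t u||\nabla u|\,\Psi^{\lambda-1}|\nabla\Psi|$ into $(t_0+t)|\pa_t u|^2\Psi^\lambda$ plus $\lambda^2(t_0+t)\Psi^{\lambda-2}|\nabla\Psi|^2|\nabla u|^2$; then $|\nabla\Psi|^2\le\Psi$ and $(t_0+t)\Psi^{\lambda-1}\le\Psi^\lambda$ turn the latter into $\lambda^2\int_\Omega|\nabla u|^2\Psi^\lambda$, while the former combines with $-2(t_0+t)\int_\Omega|\pa_t u|^2\Psi^\lambda$ and the $\lambda(t_0+t)\Psi^{\lambda-1}$ contributions (again bounded by $\Psi^\lambda$) to produce precisely $\lambda^2+\lambda+1$ on $|\nabla u|^2\Psi^\lambda$ and $\lambda+1-t_0-t$ on $|\pa_t u|^2\Psi^\lambda$. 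I expect this balancing — choosing the Young constant equal to $1$ so that exactly one factor $(t_0+t)$ survives on the dissipation term — to be the delicate point. Finally, the nonlinear term is recast as a total derivative: using $\pa_t F(u)=f(u)\pa_t u$ and the product rule, $2(t_0+t)\int_\Omega f(u)\pa_t u\,\Psi^\lambda\,dx=\frac{d}{dt}\big[2(t_0+t)\int_\Omega F(u)\Psi^\lambda\,dx\big]$ minus the zeroth-order terms $2\int_\Omega F(u)\Psi^\lambda$ and $2\lambda(t_0+t)\int_\Omega F(u)\Psi^{\lambda-1}$; bounding the latter by $(t_0+t)\Psi^{\lambda-1}\le\Psi^\lambda$ and recombining, these recollect into the stated $2(\lambda+1)\int_\Omega F(u)\Psi^\lambda\,dx$, which completes the inequality.
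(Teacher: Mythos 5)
Your proposal follows essentially the same route as the paper's proof: differentiate $E_\lambda=(t_0+t)\int_\Omega(|\nabla u|^2+|\pa_tu|^2)\Psi^\lambda\,dx$, integrate the cross term by parts and substitute the equation, absorb the resulting term $2\lambda(t_0+t)\int_\Omega \pa_tu(\nabla u\cdot\nabla\Psi)\Psi^{\lambda-1}\,dx$ via Cauchy--Schwarz/Young with the same balancing $(t_0+t)^{1/2}|\nabla\Psi|\leq\Psi$ (your pointwise version with $|\nabla\Psi|^2\leq\Psi$ and $(t_0+t)\Psi^{-1}\leq 1$ is equivalent), and recast the nonlinear term as a total derivative, so the coefficients $\lambda^2+\lambda+1$ and $\lambda+1-t_0-t$ come out exactly as in the paper. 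The only caveat --- one you share with the paper, whose corresponding display even carries a sign typo in the identity for $\int_\Omega \pa_tu\,f(u)\Psi^\lambda\,dx$ --- is that your final recombination of $-2\int_\Omega F(u)\Psi^\lambda\,dx-2\lambda(t_0+t)\int_\Omega F(u)\Psi^{\lambda-1}\,dx$ into $+2(\lambda+1)\int_\Omega F(u)\Psi^\lambda\,dx$ implicitly needs $F\geq 0$ or absolute values on $F$, which is harmless since the lemma is only ever applied through the bound $|F(u)|\leq \frac{C_f}{p+1}|u|^{p+1}$.
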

\begin{proof}
Observe that $\pa_tu(t)\in H^1_0(\Omega)$ and $\Psi=t_0+t+\frac{|x|^2}{4}$. By integration by parts we have
\begin{align*}
\frac{d}{dt}E_\lambda(t:t_0)
&=
\int_{\Omega}\Big(|\nabla u|^2+|\pa_tu|^2\Big)[\Psi^{\lambda}+(t_0+t)\lambda\Psi^{\lambda-1}]\,dx
\\
&\quad
+
2(t_0+t)\int_{\Omega}\Big(\nabla \pa_tu\cdot\nabla u+\pa_tu\pa_t^2u\Big)\Psi^{\lambda}\,dx
\\
&\leq 
(\lambda+1)
\int_{\Omega}\Big(|\nabla u|^2+|\pa_tu|^2\Big)\Psi^{\lambda}\,dx
\\
&\quad
+
2(t_0+t)
\int_{\R^N}\pa_tu(-\Delta u+\pa_t^2u)\Psi^{\lambda}
-
\lambda \pa_tu\left(\nabla u\cdot\nabla\Psi\right)\Psi^{\lambda-1}\,dx.
\end{align*}
Since $u$ satisfies \eqref{eq:ndw}, 
the Schwarz inequality and the inequality $(t_0+t)^{\frac{1}{2}}|\nabla \Psi|\leq \Psi$ yield 
\begin{align*}
\frac{d}{dt}E_\lambda(t:t_0)
&\leq 
(\lambda+1)
\int_{\R^N}\Big(|\nabla u|^2+|\pa_tu|^2\Big)\Psi^{\lambda}\,dx
-
2
(t_0+t)
\int_{\R^N}|\pa_tu|^2\Psi^{\lambda}\,dx
\\
&\quad
+
2
(t_0+t)
\int_{\R^N}\pa_tu(f(u))\Psi^{\lambda}\,dx
\\
&\quad+
2
\lambda
(t_0+t)^{\frac{1}{2}}
\left(
\int_{\R^N}
|\pa_tu|^2\Psi^{\lambda}\,dx
\right)^{\frac{1}{2}}
\left(
\int_{\R^N}
|\nabla u|^2\Psi^{\lambda}\,dx
\right)^{\frac{1}{2}}.
\end{align*}
Noting that 
\begin{align*}
\int_{\R^N}\pa_tu(f(u))\Psi^{\lambda}\,dx
&=
\frac{d}{dt}
\left[
(t_0+t)\int_{\R^N}F(u)\Psi^{\lambda}\,dx
\right]
+
\int_{\R^N}F(u)[\Psi^{\lambda}+(t_0+t)\lambda \Psi^{\lambda-1}]\,dx
\\
&\leq 
\frac{d}{dt}
\left[
(t_0+t)\int_{\R^N}F(u)\Psi^{\lambda}\,dx
\right]
+
(\lambda+1)\int_{\R^N}F(u)\Psi^{\lambda}\,dx,
\end{align*}
we deduce the desired inequality. 
\end{proof}
Next we assume $\lambda\in [0,\frac{N}{2})$. 
Set $\delta=\frac{N-2\lambda}{4N}\in (0,\frac{1}{4})$.  
Define 
\[
\beta:=\frac{\lambda}{1-2\delta}=\frac{2\lambda N}{N+2\lambda}\in \left(\lambda,\frac{N}{2}\right)
\]
and
\begin{equation}
\label{eq:E_lambda_2}
\widetilde{E}_\lambda(t:t_0)
:=
\int_{\R^N}\Big(2u(x,t)\pa_tu(x,t)+|u(x,t)|^2\Big)\Phi_\beta(x,t:t_0)^{-1+2\delta}\,dx, 
\quad 
t\in [0,T_*). 
\end{equation}
Then the following inequality holds.
\begin{lemma}\label{lem:energy-2}
Let $\widetilde{E}_{\lambda}$ be as in \eqref{eq:E_lambda_2}. 
Then for every $t_0\geq 1$ and $\lambda\in [0,\frac{N}{2})$ 
with $\delta=\frac{N-\lambda}{4N}$ and $\beta=\frac{2\lambda N}{N+2\lambda}$, one has 
\begin{align*}
\frac{d}{dt}\widetilde{E}_\lambda(t:t_0)
&\leq 
\frac{1}{c_{\beta}^{1-2\delta}}\left(
2
+\frac{(1-2\delta)\beta C_{\beta+1}}{c_\beta K(\lambda)}
\right)
\int_{\R^N}|\pa_tu|^2\Psi^{\lambda}\,dx
\\
&\quad+
\left(
\frac{(1-2\delta)\beta C_{\beta+1}}{c_\beta^{2-2\delta}t_0}
-
\frac{2\delta}{(1-\delta)C_\beta^{1-2\delta}}
\right)
\int_{\R^N}|\nabla u|^2\Psi^\lambda\,dx
+\frac{2}{c_{\beta}^{1-2\delta}}
\int_{\R^N}|uf(u)|\Psi^\lambda\,dx.
\end{align*}
\end{lemma}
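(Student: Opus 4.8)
The plan is to differentiate $\widetilde{E}_\lambda$ directly, insert the equation, and exploit two structural features of the weight $\Phi_\beta$: that it solves the heat equation (Lemma \ref{Phi}(i)) and that $\pa_t\Phi_\beta=\Delta\Phi_\beta=-\beta\Phi_{\beta+1}$ (Lemma \ref{Phi}(ii)). First I would compute $\frac{d}{dt}\widetilde{E}_\lambda$ by the product rule. Differentiating the factor $2u\pa_tu+u^2$ gives $2(\pa_tu)^2+2u\pa_t^2u+2u\pa_tu$, while differentiating $\Phi_\beta^{-1+2\delta}$ produces, via Lemma \ref{Phi}(ii), the multiplier $(1-2\delta)\beta\,\Phi_{\beta+1}\Phi_\beta^{-2+2\delta}$. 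Substituting $\pa_t^2u=\Delta u-\pa_tu+f(u)$ from \eqref{eq:ndw} into $2u\pa_t^2u$, the term $-2u\pa_tu$ cancels the $+2u\pa_tu$ coming from the product rule, leaving $2(\pa_tu)^2+2u\Delta u+2uf(u)$ weighted by $\Phi_\beta^{-1+2\delta}$, together with the weight-derivative contribution $(1-2\delta)\beta\int_\Omega(2u\pa_tu+u^2)\Phi_{\beta+1}\Phi_\beta^{-2+2\delta}\,dx$.

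Next I would treat $\int_\Omega 2u\Delta u\,\Phi_\beta^{-1+2\delta}\,dx$ with Lemma \ref{lem:ibp} applied to $\Phi=\Phi_\beta$ with the same $\delta$. Since $\Delta\Phi_\beta=-\beta\Phi_{\beta+1}$ by Lemma \ref{Phi}(i)--(ii), this yields the favorable negative term $-\frac{2\delta}{1-\delta}\int_\Omega|\nabla u|^2\Phi_\beta^{-1+2\delta}\,dx$ together with $-(1-2\delta)\beta\int_\Omega u^2\Phi_{\beta+1}\Phi_\beta^{-2+2\delta}\,dx$. The crucial observation is that this last $u^2$-term exactly cancels the $u^2$-part of the weight-derivative contribution from the first step; only the cross term $2(1-2\delta)\beta\int_\Omega u\,\pa_tu\,\Phi_{\beta+1}\Phi_\beta^{-2+2\delta}\,dx$ survives. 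This cancellation is the heart of the argument, and it is precisely what dictates the choice $\beta=\lambda/(1-2\delta)$: it forces $\Delta\Phi_\beta$ and $\pa_t(\Phi_\beta^{-1+2\delta})$ to carry the same factor $\Phi_{\beta+1}\Phi_\beta^{-2+2\delta}$.

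I would then convert all powers of $\Phi_\beta,\Phi_{\beta+1}$ into powers of $\Psi$ using the two-sided bounds of Lemma \ref{Phi}(iii)--(iv), exploiting the identity $\beta(1-2\delta)=\lambda$ so that the exponents line up. The lower bound $\Phi_\beta\geq c_\beta\Psi^{-\beta}$ gives $\Phi_\beta^{-1+2\delta}\leq c_\beta^{-1+2\delta}\Psi^{\lambda}$ (controlling the $2(\pa_tu)^2$ and $2uf(u)$ terms, with constant $2c_\beta^{-(1-2\delta)}$) and $\Phi_{\beta+1}\Phi_\beta^{-2+2\delta}\leq C_{\beta+1}c_\beta^{-2+2\delta}\Psi^{\lambda-1}$ (controlling the surviving cross term); the upper bound $\Phi_\beta\leq C_\beta\Psi^{-\beta}$ gives $\Phi_\beta^{-1+2\delta}\geq C_\beta^{-1+2\delta}\Psi^{\lambda}$, which preserves the sign of $-\frac{2\delta}{1-\delta}\int_\Omega|\nabla u|^2\Phi_\beta^{-1+2\delta}\,dx$ and yields the term $-\frac{2\delta}{(1-\delta)C_\beta^{1-2\delta}}\int_\Omega|\nabla u|^2\Psi^\lambda\,dx$ in the statement.

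Finally, the surviving cross term reduces to estimating $\int_\Omega|u||\pa_tu|\Psi^{\lambda-1}\,dx$. Here I would bound $2|u||\pa_tu|\Psi^{\lambda-1}$ by Young's inequality into a multiple of $|u|^2\Psi^{\lambda-1}$ plus a multiple of $|\pa_tu|^2\Psi^{\lambda-1}$, then apply Lemma \ref{lem:hardy} with exponent $\lambda$ to the first, $\int_\Omega|u|^2\Psi^{\lambda-1}\,dx\leq\frac{1}{4K(\lambda)^2}\int_\Omega|\nabla u|^2\Psi^\lambda\,dx$, and the elementary bound $\Psi^{-1}\leq(t_0+t)^{-1}\leq t_0^{-1}$ to the second, choosing the Young parameter so that the resulting coefficients are the $\frac{1}{t_0}$ (on $|\nabla u|^2$) and $\frac{1}{K(\lambda)}$ (on $|\pa_tu|^2$) appearing in the claim. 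Collecting the gradient contributions, the $|\pa_tu|^2$ contributions, and the nonlinear term then gives the asserted inequality. I expect the main obstacle to be bookkeeping rather than analysis: tracking the exact constants through the two-sided $\Phi$/$\Psi$ comparisons and calibrating the Young parameter so that $K(\lambda)$ and $t_0^{-1}$ land in exactly the stated coefficients. The genuine analytic content is the heat-equation cancellation of the $u^2$-terms, which becomes transparent once $\beta$ and $\delta$ are tied together as above.
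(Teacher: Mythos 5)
Your proposal is correct and follows essentially the same route as the paper's proof: differentiate $\widetilde{E}_\lambda$, substitute the equation, apply Lemma \ref{lem:ibp} with $\Phi=\Phi_\beta$, cancel the $|u|^2$-terms via $\pa_t\Phi_\beta=\Delta\Phi_\beta$ (the paper writes this as the vanishing of $\int|u|^2\Phi_\beta^{-2+2\delta}(\pa_t\Phi_\beta-\Delta\Phi_\beta)\,dx$), convert to $\Psi$-weights through Lemma \ref{Phi}(iii)--(iv) using $\beta(1-2\delta)=\lambda$, and handle the surviving cross term by Hardy's inequality (Lemma \ref{lem:hardy}) plus Young's inequality, exactly as in the paper. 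One small point of rationale: the $u^2$-cancellation holds for \emph{every} $\beta$ since $\Phi_\beta$ always solves the heat equation, so the choice $\beta=\lambda/(1-2\delta)$ is dictated not by the cancellation but, as you correctly use later, by matching $\Phi_\beta^{-1+2\delta}$ with $\Psi^\lambda$.
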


\begin{proof}
Since $u$ is a solution of \eqref{eq:ndw}, 
we have 
\begin{align*}
\frac{d}{dt}\widetilde{E}_\lambda(t:t_0)
&=
2\int_{\R^N}|\pa_tu|^2\Phi_\beta^{-1+2\delta}\,dx
+2\int_{\R^N}u(\pa_t^2u+\pa_tu)\Phi_\beta^{-1+2\delta}\,dx
\\
&\quad -(1-2\delta)\int_{\R^N}\Big(2u\pa_tu+|u|^2\Big)\Phi_\beta^{-2+2\delta}\pa_t\Phi_\beta\,dx
\\
&=
2\int_{\R^N}|\pa_tu|^2\Phi_\beta^{-1+2\delta}\,dx
+2\int_{\R^N}u(\Delta u)\Phi_\beta^{-1+2\delta}\,dx
+2\int_{\R^N}uf(u)\Phi_\beta^{-1+2\delta}\,dx
\\
&
\quad 
-2(1-2\delta)\int_{\R^N}u\pa_tu\Phi_\beta^{-2+2\delta}\pa_t\Phi_\beta\,dx
-(1-2\delta)\int_{\R^N}|u|^2\Phi_\beta^{-2+2\delta}\pa_t\Phi_\beta\,dx.
\end{align*}
Employing Lemma \ref{lem:ibp} with $\Phi=\Phi_\beta$,  we have
\begin{align*}
\frac{d}{dt}\widetilde{E}_\lambda(t:t_0)
&\leq
2\int_{\R^N}|\pa_tu|^2\Phi_\beta^{-1+2\delta}\,dx
-\frac{2\delta}{1-\delta}\int_{\R^N}|\nabla u|^2\Phi_\beta^{-1+2\delta}\,dx
+2\int_{\R^N}uf(u)\Phi_\beta^{-1+2\delta}\,dx
\\
&
\quad 
-2(1-2\delta)\int_{\R^N}u\pa_tu\Phi_\beta^{-2+2\delta}\pa_t\Phi_\beta\,dx
-(1-2\delta)\int_{\R^N}|u|^2\Phi_\beta^{-2+2\delta}(\pa_t\Phi_\beta-\Delta\Phi_\beta)\,dx.
\end{align*}
Here we use the profile of $\Phi_\beta$ stated in Lemma \ref{Phi}. 
Then 
\begin{align*}
\frac{d}{dt}\widetilde{E}_\lambda(t:t_0)
&\leq
\frac{2}{c_\beta^{1-2\beta}}\int_{\R^N}|\pa_tu|^2\Psi^{\lambda}\,dx
-
\frac{2\delta}{(1-\delta)C_\beta^{1-2\delta}}\int_{\R^N}|\nabla u|^2\Psi^\lambda\,dx
+\frac{2}{c_\beta^{1-2\beta}}\int_{\R^N}|uf(u)|\Psi^\lambda\,dx
\\
&
\quad 
+\frac{2(1-2\delta)\beta C_{\beta+1}}{c_\beta^{2-2\delta}}
\int_{\Omega}|u|\,|\pa_tu|\Psi^{\lambda-1}\,dx.
\end{align*}
By Lemma \ref{lem:hardy}, 
the last term on the right-hand side of the above inequality 
can be estimated as follows:
\begin{align*}
\int_{\Omega}|u|\,|\pa_tu|\Psi^{\lambda-1}\,dx
&\leq 
\left(\int_{\Omega}|u|^2\Psi^{\lambda-1}\,dx\right)^{\frac{1}{2}}
\left(\int_{\Omega}|\pa_tu|^2\Psi^{\lambda-1}\,dx\right)^{\frac{1}{2}}
\\
&\leq 
\frac{1}{K(\lambda)(t_0+t)^{\frac{1}{2}}}
\left(\int_{\Omega}|\nabla u|^2\Psi^{\lambda}\,dx\right)^{\frac{1}{2}}
\left(\int_{\Omega}|\pa_tu|^2\Psi^{\lambda}\,dx\right)^{\frac{1}{2}}.
\end{align*}
Hence we obtain 
the desired inequality. 
\end{proof}

To the end of this section 
we will give an estimate for the following weighted total energy functional:
\begin{align}
\nonumber 
m_\lambda(t:t_0)
&:=
(t_0+t)\int_{\R^N}\Big(|\nabla u(x,t)|^2+|\pa_tu(x,t)|^2\Big)\Psi(x,t:t_0)^{\lambda}\,dx
\\
\label{eq:total}
&\quad+\int_{\R^N}|u(x,t)|^2\Psi(x,t:t_0)^{\lambda}\,dx, 
\quad 
t\in [0,T_*). 
\end{align}
\begin{lemma}\label{lem:equiv}
for every $\nu>0$ there exists positive constants $\gamma_\nu>0$ and $\Gamma_\nu>0$ 
such that 
if $t_0\geq c_\beta^{-1+2\delta}\nu$, then  
\[
\gamma_\nu 
m_\lambda(t:t_0)
\leq 
E_\lambda(t:t_0)
+
\nu 
\widetilde{E}_\lambda(t:t_0)
\leq 
\gamma_\nu 
m_\lambda(t:t_0), 
\quad t\in (0,T_*).
\]
\end{lemma}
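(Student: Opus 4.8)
The plan is to show that $\widetilde{E}_\lambda$ supplies precisely the zeroth-order quantity $\int_{\R^N}|u|^2\Psi^\lambda\,dx$ that distinguishes $m_\lambda$ from $E_\lambda$, so that the combination $E_\lambda+\nu\widetilde{E}_\lambda$ is comparable to $m_\lambda$. The starting point is a pointwise weight equivalence extracted from Lemma \ref{Phi}(iii)--(iv). Since $\beta=\frac{\lambda}{1-2\delta}$, we have $\beta(1-2\delta)=\lambda$, and because $-1+2\delta<0$ the map $s\mapsto s^{-1+2\delta}$ is decreasing on $(0,\infty)$; hence raising $c_\beta\Psi^{-\beta}\le\Phi_\beta\le C_\beta\Psi^{-\beta}$ to the power $-1+2\delta$ reverses the inequalities and gives
\[
C_\beta^{-1+2\delta}\,\Psi^{\lambda}\ \le\ \Phi_\beta^{-1+2\delta}\ \le\ c_\beta^{-1+2\delta}\,\Psi^{\lambda}.
\]
This turns every $\Phi_\beta^{-1+2\delta}$-weighted integral appearing in $\widetilde{E}_\lambda$ into a $\Psi^\lambda$-weighted one, at the cost of the constants $c_\beta,C_\beta$.

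For the upper bound I would estimate the indefinite cross term by Young's inequality, $2u\,\pa_tu+|u|^2\le 2|u|^2+|\pa_tu|^2$, and then apply the upper weight bound to obtain $\widetilde{E}_\lambda\le c_\beta^{-1+2\delta}\int_{\R^N}(2|u|^2+|\pa_tu|^2)\Psi^\lambda\,dx$. Since $\int|u|^2\Psi^\lambda\,dx\le m_\lambda$ and $\int|\pa_tu|^2\Psi^\lambda\,dx\le (t_0+t)^{-1}E_\lambda\le m_\lambda$ (using $t_0\ge1$ and $E_\lambda\le m_\lambda$), one gets $E_\lambda+\nu\widetilde{E}_\lambda\le\Gamma_\nu m_\lambda$ with an explicit $\Gamma_\nu=1+3\nu c_\beta^{-1+2\delta}$, and no restriction on $t_0$ is needed here.

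The lower bound is the crux. Instead of discarding the cross term I would keep its contribution to $|u|^2$ via the completing-the-square identity $2u\,\pa_tu+|u|^2=\tfrac12|u|^2-2|\pa_tu|^2+\tfrac12(u+2\pa_tu)^2\ge\tfrac12|u|^2-2|\pa_tu|^2$ (equivalently Young's inequality with a parameter $\eta\in(0,1)$). Applying the \emph{lower} weight bound to the favourable $|u|^2$ term and the \emph{upper} weight bound to the unfavourable $|\pa_tu|^2$ term yields
\[
\widetilde{E}_\lambda\ \ge\ \tfrac12 C_\beta^{-1+2\delta}\!\int_{\R^N}\!|u|^2\Psi^\lambda\,dx\ -\ 2c_\beta^{-1+2\delta}\!\int_{\R^N}\!|\pa_tu|^2\Psi^\lambda\,dx.
\]
Crucially, the defect term carries no factor $(t_0+t)$, whereas the kinetic part inside $E_\lambda$ does, so adding $E_\lambda$ produces
\[
E_\lambda+\nu\widetilde{E}_\lambda\ \ge\ (t_0+t)\!\int|\nabla u|^2\Psi^\lambda+\big[(t_0+t)-2\nu c_\beta^{-1+2\delta}\big]\!\int|\pa_tu|^2\Psi^\lambda+\tfrac{\nu}{2}C_\beta^{-1+2\delta}\!\int|u|^2\Psi^\lambda.
\]
This is exactly where the hypothesis $t_0\ge c_\beta^{-1+2\delta}\nu$ is used: it keeps the bracketed coefficient of $|\pa_tu|^2$ nonnegative, so the right-hand side dominates $\gamma_\nu\big[(t_0+t)\int(|\nabla u|^2+|\pa_tu|^2)\Psi^\lambda+\int|u|^2\Psi^\lambda\big]=\gamma_\nu m_\lambda$ for a suitable $\gamma_\nu>0$ (optimizing the Young parameter $\eta\uparrow1$ lowers the required threshold to precisely this order, at the expense of shrinking $\gamma_\nu$).

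The main obstacle is the sign-indefinite cross term $2u\,\pa_tu$ in $\widetilde{E}_\lambda$: it is what could destroy coercivity, and the whole role of the large-$t_0$ threshold is to absorb the resulting $|\pa_tu|^2$ deficit into the $(t_0+t)$-weighted kinetic energy of $E_\lambda$, while the completing-the-square step guarantees that the positive multiple of $\int|u|^2\Psi^\lambda$ survives.
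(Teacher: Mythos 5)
Your proposal is correct and follows essentially the same route as the paper's proof: both exploit the pointwise equivalence $C_\beta^{-1+2\delta}\Psi^{\lambda}\leq \Phi_\beta^{-1+2\delta}\leq c_\beta^{-1+2\delta}\Psi^{\lambda}$ coming from Lemma \ref{Phi} {\bf (iii)}--{\bf (iv)} and $\beta(1-2\delta)=\lambda$, control the indefinite cross term $2u\pa_tu$ by Cauchy--Schwarz/Young, and use the largeness of $t_0$ to absorb the resulting $|\pa_tu|^2$ defect (which carries no factor $t_0+t$) into the $(t_0+t)$-weighted kinetic part of $E_\lambda$, with your pointwise completing-the-square being only a cosmetic variant of the paper's integral-level Schwarz-then-Young step. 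One small shared caveat: at the exact threshold $t_0=c_\beta^{-1+2\delta}\nu$ your bracket $(t_0+t)-2\nu c_\beta^{-1+2\delta}$ need not be a positive multiple of $t_0+t$ (and the paper's coefficient $1-\nu c_\beta^{-(1-2\delta)}t_0^{-1}$ likewise degenerates to $0$), so strictly speaking both arguments require $t_0$ larger by a fixed constant factor -- which is harmless, as the lemma is applied with $t_0^*$ chosen sufficiently large.
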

\begin{proof}
By the Schwarz inequality and Lemma \ref{Phi} {\bf (iv)} 
we see that 
\begin{align*}
\left|
\int_{\Omega}
  u\pa_tu\Phi_\beta^{-1+2\delta}
\,dx
\right|
&\leq 
\left(
\int_{\Omega}
  |\pa_tu|^2\Phi_\beta^{-1+2\delta}
\,dx
\right)^{\frac{1}{2}}
\left(
\int_{\Omega}
  |u|^2\Phi_\beta^{-1+2\delta}
\,dx
\right)^{\frac{1}{2}}
\\
&\leq 
\frac{1}{c_\beta^{\frac{1}{2}-\delta}(t_0+t)^{\frac{1}{2}}}
E_\lambda(t:t_0)^{\frac{1}{2}}
\left(
\int_{\Omega}
  |u|^2\Phi_\beta^{-1+2\delta}
\,dx
\right)^{\frac{1}{2}}
\\
&\leq 
\frac{1}{c_\beta^{1-2\delta}t_0}
E_\lambda(t:t_0)
+
\frac{1}{4}
\int_{\Omega}
  |u|^2\Phi_\beta^{-1+2\delta}
\,dx
\end{align*}
and therefore 
\begin{align*}
E_\lambda(t:t_0)
+\nu \widetilde{E}_\lambda(t:t_0)
&\geq
\left(1-
\frac{\nu}{c_\beta^{1-2\delta}t_0}\right)
E_\lambda(t:t_0)
+
\frac{\nu}{2}
\int_{\Omega}
  |u|^2\Phi_\beta^{-1+2\delta}
\,dx
\\
E_\lambda(t:t_0)
+\nu \widetilde{E}_\lambda(t:t_0)
&\leq
\left(1+
\frac{\nu}{c_\beta^{1-2\delta}t_0}\right)
E_\lambda(t:t_0)
+
\frac{3\nu}{2}
\int_{\Omega}
  |u|^2\Phi_\beta^{-1+2\delta}
\,dx.
\end{align*}
In view of Lemma \ref{Phi} {\bf (iii)}, 
this means that the assumption $t_0\geq c_\beta^{-1+2\delta}\nu$ 
implies the assertion of this lemma. 
\end{proof}
Furthermore we set 
\begin{align*}
Y_\lambda(t:t_0)
&:=
\int_0^t\!\!\int_{\Omega}|\nabla u(x,s)|^2\Psi(s:t_0)^\lambda\,dx\,ds, \quad t\in [0,T_*)
\\
Z_\lambda(t:t_0)
&:=
\int_0^t(t_0+s)\int_{\Omega}|\pa_tu(x,s)|^2\Psi(s:t_0)^\lambda\,dx\,ds, 
\quad t\in [0,T_*).
\end{align*}
\begin{proposition}\label{prop:WEE}
There exists positive constants  $t_0^*\geq 1$ and $\eta^*>0$ 
such that 
\begin{align*}
&\eta^*
\left(
m_\lambda(t:t_0^*)
+
Y_\lambda(t:t_0^*)
+
Z_\lambda(t:t_0^*)
\right)
\\
&\leq
m_\lambda(0:t_0^*)
+\int_{\R^N}|F(u_0)|\Psi_*(0)^\lambda\,dx
\\
&\quad
+(t_0^*+t)\int_{\R^N}|F(u(t))|\Psi_*(t)^\lambda\,dx
+
\int_0^t\!\!
\int_{\R^N}
(|F(u(s))|+|u(s)f(u(s))|)\Psi_*(s)^{\lambda}
\,dx\,ds, 
\end{align*}
where $\Psi_*(\cdot,t)=\Psi(\cdot,t:t_0^*)$. 
\end{proposition}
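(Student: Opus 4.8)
The plan is to track a single Lyapunov-type functional $G(t):=E_\lambda(t:t_0)+\nu\widetilde{E}_\lambda(t:t_0)$, for a pair of parameters $\nu>0$ and $t_0\geq1$ to be fixed, and to arrange that after differentiation in $t$ the coefficients of both $\int_\Omega|\nabla u|^2\Psi^\lambda\,dx$ and $\int_\Omega|\pa_tu|^2\Psi^\lambda\,dx$ are negative. Adding the estimate of Lemma \ref{lem:energy-1} to $\nu$ times that of Lemma \ref{lem:energy-2} gives
\[
\frac{d}{dt}G(t)\leq A\int_\Omega|\nabla u|^2\Psi^\lambda\,dx+B\int_\Omega|\pa_tu|^2\Psi^\lambda\,dx+\mathcal{R}(t),
\]
where $\mathcal R(t)$ collects the nonlinear contributions
\[
\mathcal R(t)=\frac{d}{dt}\Big[2(t_0+t)\!\int_\Omega F(u)\Psi^\lambda\,dx\Big]+2(\lambda+1)\!\int_\Omega F(u)\Psi^\lambda\,dx+\frac{2\nu}{c_\beta^{1-2\delta}}\!\int_\Omega|uf(u)|\Psi^\lambda\,dx,
\]
and
\[
A=(\lambda^2+\lambda+1)+\nu\frac{(1-2\delta)\beta C_{\beta+1}}{c_\beta^{2-2\delta}t_0}-\nu\frac{2\delta}{(1-\delta)C_\beta^{1-2\delta}},\qquad B=(\lambda+1-t_0-t)+\frac{\nu}{c_\beta^{1-2\delta}}\Big(2+\frac{(1-2\delta)\beta C_{\beta+1}}{c_\beta K(\lambda)}\Big).
\]

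The delicate point, which I expect to be the main obstacle, is the \emph{order} in which $\nu$ and $t_0$ are fixed. The gradient coefficient $A$ contains a genuinely negative contribution $-\nu\,2\delta/((1-\delta)C_\beta^{1-2\delta})$, coming from the integration-by-parts dissipation of Lemma \ref{lem:ibp} and growing linearly in $\nu$, but it also contains a positive $O(\nu/t_0)$ term; meanwhile enlarging $\nu$ simultaneously worsens the fixed positive constant in $B$. I would therefore first fix $\nu$ large enough, depending only on $\lambda,\delta,\beta$ and the constants $c_\beta,C_\beta,C_{\beta+1},K(\lambda)$, that the constant part $(\lambda^2+\lambda+1)-\nu\,2\delta/((1-\delta)C_\beta^{1-2\delta})$ of $A$ is at most $-2$. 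Only afterwards would I fix $t_0=t_0^*$ so large that, on one hand, the remaining positive $O(\nu/t_0)$ term in $A$ is at most $1$, forcing $A\leq-1$, and, on the other hand, the ($\nu$-dependent but $t$-independent) constant $(\lambda+1)+\nu c_\beta^{-(1-2\delta)}\big(2+\cdots\big)$ in $B$ is dominated by $\tfrac12(t_0+t)$ for every $t\geq0$, forcing $B\leq-\tfrac12(t_0+t)$. Since the threshold on $t_0$ depends on $\nu$, $\nu$ must be frozen first; this quantifier ordering is the whole content of the argument.

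With $A\leq-1$ and $B\leq-\tfrac12(t_0+t)$, integrating the differential inequality on $(0,t)$ yields
\[
G(t)+\int_0^t\!\int_\Omega|\nabla u|^2\Psi^\lambda\,dx\,ds+\frac12\int_0^t(t_0+s)\!\int_\Omega|\pa_tu|^2\Psi^\lambda\,dx\,ds\leq G(0)+\int_0^t\mathcal R(s)\,ds,
\]
so that $Y_\lambda$ and $Z_\lambda$ appear on the left with positive coefficients. The total-derivative part of $\int_0^t\mathcal R\,ds$ telescopes to the boundary terms $2(t_0+t)\int F(u(t))\Psi_*(t)^\lambda\,dx$ and $-2t_0\int F(u_0)\Psi_*(0)^\lambda\,dx$ (with $t_0=t_0^*$, so $\Psi=\Psi_*$); estimating the latter by its absolute value moves it to the right as $2t_0^*\int|F(u_0)|\Psi_*(0)^\lambda\,dx$, and the remaining integrals are bounded by $\int_0^t\!\int_\Omega\big(2(\lambda+1)|F(u)|+2\nu c_\beta^{-(1-2\delta)}|uf(u)|\big)\Psi_*^\lambda\,dx\,ds$. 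Finally I invoke Lemma \ref{lem:equiv} (applicable since $t_0^*\geq c_\beta^{-1+2\delta}\nu$) to bound $G(t)$ from below by $\gamma_\nu m_\lambda(t:t_0^*)$ and $G(0)$ from above by $\Gamma_\nu m_\lambda(0:t_0^*)$. Absorbing all the fixed positive constants $\gamma_\nu,\Gamma_\nu,2t_0^*,2(\lambda+1),2\nu c_\beta^{-(1-2\delta)}$ into a single small $\eta^*>0$ on the left then produces exactly the claimed inequality.
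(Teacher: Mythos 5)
Your proposal is correct and follows essentially the same route as the paper's proof: add Lemma \ref{lem:energy-1} to $\nu$ times Lemma \ref{lem:energy-2}, make the coefficients of $\int_\Omega|\nabla u|^2\Psi^\lambda\,dx$ and $\int_\Omega|\pa_tu|^2\Psi^\lambda\,dx$ negative (the paper takes $\nu=\delta^{-1}(1-\delta)C_\beta^{1-2\delta}(\lambda^2+\lambda+2)$ and then $t_0^*$ large, yielding $-\frac14(t_0+t)$ where you get $-\frac12(t_0+t)$), integrate in time to produce $Y_\lambda$ and $Z_\lambda$, handle the nonlinear terms exactly as you do, and conclude via Lemma \ref{lem:equiv} after absorbing constants into $\eta^*$. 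The one small overstatement is your claim that $\nu$ \emph{must} be frozen before $t_0$: since the positive $O(\nu/t_0)$ and negative dissipation terms in your coefficient $A$ both scale linearly in $\nu$, one may equally well (as the paper implicitly does) first impose a $\nu$-independent threshold $t_0\geq t_1$ that halves the gradient dissipation inside Lemma \ref{lem:energy-2}, and only then choose $\nu$ and enlarge $t_0^*$; either quantifier ordering closes the argument.
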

\begin{remark}
If $f\equiv 0$, then \eqref{eq:ndw} is linear problem of 
damped wave equation in exterior domain.  
In this case Proposition \ref{prop:WEE} provides the following energy decay estimates
\begin{gather*}
\int_{\R^N}|\nabla u(t)|^2\,dx
+\int_{\R^N}|\pa_t u(t)|^2\,dx
\leq 
C(1+t)^{-\lambda-1},
\\
\int_{\R^N}|u(t)|^2\,dx
\leq 
C(1+t)^{-\lambda}
\end{gather*}
under the assumption $(|\nabla u_0|^2+|u_0|^2+|u_1|^2)\lr{x}^{\lambda}\in L^1(\Omega)$ with $\lambda\in [0,\frac{N}{2})$, 
which is slightly weaker than that of \cite{SoWa4}. 
\end{remark}
\begin{proof}
We see from Lemmas \ref{lem:energy-1} and \ref{lem:energy-2} that 
if there exists a constant $t_1>0$ such that if 
$t_0\geq t_1$, then we have   
\begin{align*}
\frac{d}{dt}
E_\lambda(t:t_0)
&\leq 
(\lambda^2+\lambda+1)\int_{\Omega}|\nabla u|^2\Psi^{\lambda}\,dx
-\frac{1}{2}(t_0+t)\int_{\Omega}|\pa_tu|^2\Psi^{\lambda}\,dx
\\
&\quad
+
\frac{d}{dt}
\left[
2(t_0+t)\int_{\Omega}
F(u)\Psi^{\lambda}
\,dx
\right]
+
2(\lambda+1)\int_{\Omega}
F(u)\Psi^{\lambda}
\,dx
\\
c_{\beta}^{1-2\delta}
\frac{d}{dt}
\widetilde{E}_\lambda(t:t_0)
&\leq 
\left(
2
+\frac{(1-2\delta)\beta C_{\beta+1}}{c_\beta K(\lambda)}
\right)
\int_{\R^N}|\pa_tu|^2\Psi^{\lambda}\,dx
-
\frac{\delta c_\beta^{1-2\delta}}{(1-\delta)C_\beta^{1-2\delta}}
\int_{\R^N}|\nabla u|^2\Psi^\lambda\,dx
\\
&\quad
+2
\int_{\R^N}|uf(u)|\Psi^\lambda\,dx.
\end{align*}
Therefore by choosing $\nu=\delta^{-1}(1-\delta)C_\beta^{1-2\delta}(\lambda^2+\lambda+2)$
and $t_0^*$ sufficiently large, 
we have
\begin{align*}
\frac{d}{dt}\Big[
E_\lambda(t:t_0)
+\nu \widetilde{E}_\lambda(t:t_0)
\Big]
&\leq 
\left[
\nu 
\left(
2
+\frac{(1-2\delta)\beta C_{\beta+1}}{c_\beta K(\lambda)}
\right)
-\frac{1}{2}(t_0+t)
\right]
\int_{\Omega}|\pa_tu|^2\Psi^{\lambda}\,dx
\\
&\quad
-\int_{\Omega}|\nabla u|^2\Psi^{\lambda}\,dx
+
\frac{d}{dt}
\left[
2(t_0+t)\int_{\Omega}
F(u)\Psi^{\lambda}
\,dx
\right]
\\
&\quad
+
2(\lambda+1)\int_{\Omega}
F(u)\Psi^{\lambda}
\,dx
+
2\nu 
\int_{\R^N}|uf(u)|\Psi^\lambda\,dx
\\
&\leq 
-\frac{1}{4}(t_0+t)
\int_{\Omega}|\pa_tu|^2\Psi^{\lambda}\,dx
-\int_{\Omega}|\nabla u|^2\Psi^{\lambda}\,dx
\\
&\quad
+
\frac{d}{dt}
\left[
2(t_0+t)\int_{\Omega}
F(u)\Psi^{\lambda}
\,dx
\right]
\\
&\quad
+
2(\lambda+1)\int_{\Omega}
F(u)\Psi^{\lambda}
\,dx
+
2\nu 
\int_{\R^N}|uf(u)|\Psi^\lambda\,dx
\end{align*}
Integrating it over $[0,t]$ and applying Lemma \ref{lem:equiv}, we have
\begin{align*}
&
\gamma_\nu m_\lambda(t:t_0)
+
\int_0^t\!\!\int_{\Omega}|\nabla u(s)|^2\Psi(s)^\lambda\,dx\,ds
+
\frac{1}{4}
\int_0^t(t_0^*+s)\int_{\Omega}|\pa_tu(s)|^2\Psi(s)^\lambda\,dx\,ds
\\
&\leq 
\Gamma_\nu m_\lambda(0:t_0)
-
2t_0\int_{\Omega}
F(u_0)\Psi(0)^{\lambda}
\,dx
\\
&\quad
+
2(t_0+t)\int_{\Omega}
F(u(t))\Psi(t)^{\lambda}
\,dx
+
\int_0^t\!\!\int_{\Omega}
\Big(2(\lambda+1)F(u(s))+2\nu |u(s)f(u(s))|)\Big)\Psi(s)^{\lambda}
\,dx\,ds,
\end{align*}
where $\Psi_*(t)=\Psi(\cdot,t:t_0^*)$. 
This yields the desired inequality. 
\end{proof}

\begin{proof}[Proof of Theorem \ref{main}]
Put $m_{\lambda}(t)=m_\lambda(t:t_0^*)$ and $Y_{\lambda}(t)=Y_\lambda(t:t_0^*)$.
By Proposition \ref{prop:WEE} 
and \eqref{ass.f}, we deduce
\begin{align}
\nonumber 
\eta^*
\Big(m_\lambda(t)+Y_\lambda(t)\Big)
&
\leq
m_\lambda^*+\frac{C_f}{p+1}(t_0^*+t)\int_{\R^N}|u(t)|^{p+1}\Psi_*(t)^\lambda\,dx
\\
\label{eq:proofmain}
&\quad+
\frac{(p+2)C_f}{p+1}
\int_0^t\!\!
\int_{\Omega}
|u(s)|^{p+1}\Psi_*(s)^{\lambda}
\,dx\,ds, 
\end{align}
where 
\[
m_{\lambda}^*:=
m_\lambda(0)+\frac{C_f}{p+1}\int_{\R^N}|u_0|^{p+1}\Psi_*(0)^\lambda\,dx<\infty.
\]
{\bf (The supercritical case $1+\frac{4}{N+2\lambda}<p\leq \frac{N}{N-2}$)} Observe that Lemma \ref{lem:GNtype} that 
\begin{align*}
&\int_{\Omega}|u(t)|^{p+1}\Psi_*(t)^\lambda\,dx
\\
&\leq 
\widetilde{C}_p^{p+1}(t_0^*+t)^{-\frac{\lambda}{2}(p-1)}
\left(
\int_{\Omega}|u(t)|^{2}\Psi_*(t)^\lambda\,dx
\right)^{\frac{p+1}{2}-\frac{N(p-1)}{4}}
\left(
\int_{\Omega}|\nabla u(t)|^{2}\Psi_*(t)^\lambda\,dx
\right)^{\frac{N(p-1)}{4}}
\\
&\leq 
\widetilde{C}_p^{p+1}(t_0^*+t)^{-\frac{N+2\lambda}{4}(p-1)}
m_{\lambda}(t)^{\frac{p+1}{2}}. 
\end{align*}
Therefore from \eqref{eq:proofmain}
we obtain the following integral inequality for $m_{\lambda}(t)$:
\begin{align*}
\eta^*
m_\lambda(t)
&\leq
m_\lambda^*
+
\frac{C_f\widetilde{C}_p^{p+1}}{p+1}
(t_0^*+t)^{1-\frac{(N+2\lambda)(p-1)}{4}}m_\lambda(t)^{\frac{p+1}{2}}
\\
&\quad+
\frac{(p+2)C_f\widetilde{C}_p^{p+1}}{p+1}
\int_0^t\!\!
(t_0^*+t)^{-\frac{(N+2\lambda)(p-1)}{4}}m_\lambda(s)^{\frac{p+1}{2}}
\,ds, \quad t\in [0,T_*). 
\end{align*}
Consequently, setting 
\[
M_\lambda(t):=\sup_{0\leq \tau\leq t}m_\lambda(\tau), \quad t\in [0,T_*)
\] 
we see from the assumption $p>1+\frac{4}{N+2\lambda}$ that $\frac{(N+2\lambda)(p-1)}{4}>1$  
\[
\eta^*	M_{\lambda}(t)
\leq 
m_{\lambda}^*
+
\frac{C_f\widetilde{C}_p^{p+1}}{p+1}
\left(
1
+
\frac{p+2}{1-\frac{(N+2\lambda)(p-1)}{4}}
\right)M_\lambda(t)^{\frac{p+1}{2}}.
\]
It is worth noticing that $M_\lambda$ is continuous.  
This implies that there exist constants $\delta_\lambda\*>0$ 
and $M_\lambda^*>0$ such that 
if $m_{\lambda}^*\leq \delta_\lambda^*$, then $M_\lambda(t)\leq M_\lambda^*$ for every $t\in [0,T_*)$, that is, we obtain 
\[
(t_0+t)
\int_{\R^N}\Big(|\nabla u(x,t)|^2+|\pa_tu(x,t)|^2\Big)\Psi(x,t)^{\lambda}\,dx
+
\int_{\R^N}|u(x,t)|^2\Psi(x,t)^{\lambda}\,dx
\leq M_\lambda^*, \quad t\in(0,T_*). 
\]
{\bf (The critical case $p=1+\frac{4}{N+2\lambda}$)} In this case, $Y_\lambda$ plays an important role. 
Note that H\"older inequality yields
\begin{align*}
\int_{\Omega}|u|^{p+1}\Psi^{\lambda}\,dx
\leq 
\left(
\int_{\Omega}|u|^2\Psi^{\lambda-1}\,dx
\right)^{1-\theta}
\left(
\int_{\Omega}|u|^{q+1}\Psi^{\frac{q+1}{2}\lambda}\,dx
\right)^{\theta}
\end{align*}
with $\theta=\frac{N}{N+2\lambda}\in(0,1)$ and 
$q=1+\frac{4}{N}$.  By Lemma \ref{lem:GNineq} 
with $p=q$ and Lemma \ref{lem:hardy}, we deduce
\[
\int_{\Omega}|u|^{q+1}\Psi^{\frac{q+1}{2}\lambda}\,dx
\leq 
C_*
\int_{\Omega}|\nabla u|^2\Psi^{\lambda}\,dx
\left(\int_{\Omega}|u|^{2}\Psi^{\lambda}\,dx\right)^{\frac{2}{N}}.
\]
Combining the above two estimates and using Lemma \ref{lem:hardy} again, we have 
\begin{align*}
\int_{\Omega}|u|^{p+1}\Psi^{\lambda}\,dx
&\leq 
C_*^\theta
\left(
\int_{\Omega}|\nabla u|^2\Psi^{\lambda}\,dx
\right)^{1-\theta+\theta}
\left(\int_{\Omega}|u|^{2}\Psi^{\lambda}\,dx\right)^{\frac{2\theta}{N}}
\\
&=
C_*^\theta
\int_{\Omega}|\nabla u|^2\Psi^{\lambda}\,dx
\left(\int_{\Omega}|u|^{2}\Psi^{\lambda}\,dx\right)^{\frac{p-1}{2}}.
\end{align*}
Therefore we see from \ref{eq:proofmain} that 
\begin{align}
\nonumber 
\eta^*
\Big(m_\lambda(t)+Y_\lambda(t)\Big)
&
\leq
m_\lambda^*+\frac{C_*^\theta C_f}{p+1}m_\lambda(t)^{\frac{p+1}{2}}
\\
\label{eq:proofmain2}
&\quad+
\frac{(p+2)C_*^\theta C_f}{p+1}
\int_0^t m_{\lambda}(s)^{\frac{p-1}{2}}
\int_{\Omega}
|\nabla u(s)|^{2}\Psi_*(s)^{\lambda}
\,dx\,ds, 
\end{align}
Choosing 
\[
\widetilde{M}_{\lambda}(t):=M_{\lambda}(t)+Y_{\lambda}(t), \quad t\in [0,T_*)
\]
and noting that 
\[
\int_0^t m_{\lambda}(s)^{\frac{p-1}{2}}
\int_{\Omega}
|\nabla u(s)|^{2}\Psi_*(s)^{\lambda}
\,dx\,ds
\leq 
M_{\lambda}(t)^{\frac{p-1}{2}}
Y_{\lambda}(t)
\leq 
\widetilde{M}_{\lambda}(t)^{\frac{p+1}{2}},
\]
we have
\[
\widetilde{M}_{\lambda}(t)
\leq m_{\lambda}^*
+
\frac{(p+3)C_*^\theta C_f}{p+1}
\widetilde{M}_{\lambda}(t)^{\frac{p+1}{2}}. 
\]
The rest of the proof is exactly the same as the supercritical case. 
The proof is complete. 
\end{proof}
\begin{remark}
Similar argument as the critical case also works 
when 
$1+\frac{4}{N+2\lambda}<p<\{\frac{N}{N-2}, 1+\frac{2}{\lambda}\}$. 
\end{remark}

\section{Remark on one-dimensional case}\label{sec:1d}
In this section we consider the one-dimensional case
\begin{equation}\label{eq:ndw_1d}
\begin{cases}
\pa_t^2u(x,t)-\pa_x^2 u(x,t) + \pa_tu(x,t) = f\big(u(x,t)\big), 
& (x,t) \in \R\times (0,T),
\\
u(x,0)=u_0(x), 
& x \in \R,
\\
\pa_tu(x,0)=u_1(x),
& x \in \R.
\end{cases}
\end{equation}
In this case, we can also discuss 
the weighted energy estimate of polynomial type for $\lambda\in [0,\frac{1}{2})$. 
However, the lack of validity of Hardy type inequality (Lemma \ref{lem:hardy}), 
we take a small modification of the weight function in \eqref{eq:E_lambda_2} as follows: 
\begin{align*}
\widetilde{\Phi}_{\beta}(x,t:t_0)=\left(2-\frac{1}{t_0+t}\right)\Phi_\beta(x,t:t_0)
\end{align*}
with $\delta=\frac{1-2\lambda}{4}$ and $\beta=\frac{2\lambda}{1+2\lambda}$.
Then by virtue of the properties of $\Phi_\beta$ in Lemma \ref{Phi}, 
we have 
\[
\pa_t\widetilde{\Phi}_{\beta}\geq \Delta\widetilde{\Phi}_{\beta}+\frac{1}{(1+t)^2}\widetilde{\Phi}_{\beta}
\]
and hence
we can proceed an argument similar to the one in Section \ref{sec:wee} with $p>1+\frac{4}{N+2\lambda}=1+\frac{4}{1+2\lambda}$. 
It should be noticed that 
the case $p=1+\frac{4}{1+2\lambda}$ cannot be treated because of 
the lack of the validity of weighted Hardy inequality. 
Therefore we have the following estimate
\[
\eta^* M_{\lambda}(t)\leq 
m_{\lambda}^*+\widetilde{C}_\ep M_{\lambda}(t)^{\frac{p+1}{2}}\quad 
t\in [0,T_*).
\]
Consequently, we can obtain
\begin{theorem}\label{main_1d}
Assume that $N=1$ and $f$ satisfies \eqref{ass.f} with $1<p<\infty$. 
Then for every $\lambda \in [0,\frac{1}{2})$, 
there exists a positive constant $\delta_{\lambda}^*>0$ such that 
the following assertion holds:
For every $(u_0,u_1)\in H^2(\R)\times H^1(\R)$ satisfying
\[
\int_{\R}\Big(|\nabla u_0|^2+|u_0|^2+|u_1|^2\Big)\lr{x}^{2\lambda}\,dx\leq \delta_\lambda^*, 
\] 
one has $T_*(u_0,u_1)=\infty$ when $p>1+\frac{4}{1+2\lambda}$. Namely, there exists a global weak solution $u$ of \eqref{eq:ndw_1d}
with initial data $(u_0,u_1)$.
Moreover, $u$ satisfies the following weighted energy estimate:
there exists a positive constant $M_{\lambda}^*>0$ such that 
\begin{gather*}
\int_{\R}\left(|\nabla u(t)|^2+|\pa_tu(t)|^2\right)\left(1+t+|x|^{2}\right)^{\lambda}\,dx
\leq \frac{M_{\lambda}^*}{1+t}, 
\\
\int_{\R}|u_1(t)|^2\left(1+t+|x|^{2}\right)^{\lambda}\,dx
\leq M_{\lambda}^*.
\end{gather*}
\end{theorem}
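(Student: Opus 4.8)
The plan is to rerun the weighted-energy scheme of Section~\ref{sec:wee} verbatim on the whole line (where $\Omega=\R$ carries no boundary, so every integration by parts is boundary-free), replacing $\Phi_\beta$ by the modified weight $\widetilde\Phi_\beta=(2-\tfrac{1}{t_0+t})\Phi_\beta$ and tracking the one extra term that its defect from the heat equation produces. I would leave $E_\lambda$ of \eqref{eq:E_lambda_1} and Lemma~\ref{lem:energy-1} untouched, since that computation never uses Hardy's inequality. For the second functional I set
\[
\widetilde E_\lambda(t:t_0):=\int_\R\bigl(2u\,\pa_tu+|u|^2\bigr)\widetilde\Phi_\beta^{-1+2\delta}\,dx
\]
and repeat the proof of Lemma~\ref{lem:energy-2} line by line. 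Every step is identical except the term $-(1-2\delta)\int|u|^2\widetilde\Phi_\beta^{-2+2\delta}(\pa_t\widetilde\Phi_\beta-\Delta\widetilde\Phi_\beta)\,dx$, which vanished before because $\Phi_\beta$ solved the heat equation but now, by the stated inequality $\pa_t\widetilde\Phi_\beta\geq\Delta\widetilde\Phi_\beta+(1+t)^{-2}\widetilde\Phi_\beta$, contributes a \emph{favorable} term bounded above by $-c\,(1+t)^{-2}\int|u|^2\Psi^\lambda\,dx$ (using Lemma~\ref{Phi} to compare $\widetilde\Phi_\beta^{-1+2\delta}$ with $\Psi^\lambda$).

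The crux is that this favorable term replaces Hardy's inequality at the two places Section~\ref{sec:wee} invoked it. First, the cross term $\int|u|\,|\pa_tu|\Psi^{\lambda-1}\,dx$, previously controlled by Lemma~\ref{lem:hardy}, I would instead bound by Young's inequality as $\eta\int|\pa_tu|^2\Psi^\lambda\,dx+\tfrac{1}{4\eta}\int|u|^2\Psi^{\lambda-2}\,dx$ and then use $\Psi^{-1}\leq(t_0+t)^{-1}$ to write $\Psi^{\lambda-2}\leq(t_0+t)^{-2}\Psi^\lambda$; the resulting $(t_0+t)^{-2}\int|u|^2\Psi^\lambda$ is absorbed by the favorable $(1+t)^{-2}$ term for $t_0$ large, while the $\eta\int|\pa_tu|^2\Psi^\lambda$ piece is swallowed by the dissipation $-\tfrac12(t_0+t)\int|\pa_tu|^2\Psi^\lambda$. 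Second, for the nonlinearity I would prove the one-dimensional analog of Lemma~\ref{lem:GNtype} \emph{without} Hardy: applying Lemma~\ref{lem:GNineq} (valid for all $p\in(1,\infty)$ when $N=1$) to $w=v\Psi^{\lambda/(p+1)}$ and expanding $\nabla w$, the lower-order piece $\|v\Psi^{\lambda/(p+1)-1/2}\|_{L^2}$ is dominated by $(t_0+t)^{-1/2}\|v\Psi^{\lambda/(p+1)}\|_{L^2}$ via $\Psi^{-1/2}\leq(t_0+t)^{-1/2}$. Since this decaying piece turns out to be exactly the same order as the genuine gradient term once measured against $m_\lambda(t)$, tracking the powers of $(t_0+t)$ reproduces the \emph{same} pointwise-in-time bound
\[
\int_\R|u|^{p+1}\Psi^\lambda\,dx\leq C\,(t_0+t)^{-\frac{(1+2\lambda)(p-1)}{4}}\,m_\lambda(t)^{\frac{p+1}{2}}
\]
as in the supercritical multidimensional case.

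With these two substitutions the combined functional $E_\lambda+\nu\widetilde E_\lambda$ satisfies the same dissipative differential inequality as in Proposition~\ref{prop:WEE}, and inserting the displayed nonlinear bound (both $F(u)$ and $|uf(u)|$ being $\lesssim|u|^{p+1}$ by \eqref{ass.f}) yields precisely the integral inequality $\eta^*M_\lambda(t)\leq m_\lambda^*+\widetilde C_\ep M_\lambda(t)^{(p+1)/2}$ recorded just above the theorem. From here I would finish exactly as at the end of the proof of Theorem~\ref{main}: since $M_\lambda$ is continuous with $M_\lambda(0)\leq m_\lambda^*$ and the map $X\mapsto m_\lambda^*+\widetilde C_\ep X^{(p+1)/2}$ has a positive fixed point bounded away from $M_\lambda(0)$ once $m_\lambda^*\leq\delta_\lambda^*$ is small enough, a continuity argument gives a uniform bound $M_\lambda(t)\leq M_\lambda^*$ on $[0,T_*)$. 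The blowup alternative (Proposition~\ref{prop:alt}) then forces $T_*=\infty$, and reading off $m_\lambda(t)\leq M_\lambda^*$ together with $\Psi\geq t_0+t$ yields the two stated weighted estimates.

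I expect the only genuine obstacle to be the failure of the weighted Hardy inequality (Lemma~\ref{lem:hardy}) for $\lambda\in[0,\tfrac12)$ when $N=1$; every other estimate in Section~\ref{sec:wee} is dimension-independent. The price of the workaround is the strict exponent: the critical treatment in Theorem~\ref{main} used Hardy to convert $\int|u|^{p+1}\Psi^\lambda\,dx$ into a single power of $\int|\nabla u|^2\Psi^\lambda\,dx$ (which the term $Y_\lambda$ absorbs), and with no Hardy available this conversion is impossible in one dimension. This is exactly why the statement must restrict to $p>1+\tfrac{4}{1+2\lambda}$ rather than allowing equality.
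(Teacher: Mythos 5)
Your proposal is correct and follows essentially the same route as the paper: Section~\ref{sec:1d} is exactly this argument in sketch form, namely replacing $\Phi_\beta$ by $\widetilde{\Phi}_\beta=(2-\tfrac{1}{t_0+t})\Phi_\beta$ so that the defect $\pa_t\widetilde{\Phi}_\beta-\Delta\widetilde{\Phi}_\beta\gtrsim (t_0+t)^{-2}\widetilde{\Phi}_\beta$ produces the favorable term that substitutes for the failed Hardy inequality, and then rerunning Section~\ref{sec:wee} to reach $\eta^*M_\lambda(t)\leq m_\lambda^*+\widetilde{C}_\ep M_\lambda(t)^{\frac{p+1}{2}}$ for strictly supercritical $p$. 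Your two substitutions (Young's inequality with $\Psi^{\lambda-2}\leq (t_0+t)^{-2}\Psi^{\lambda}$ for the cross term, and the Hardy-free weighted Gagliardo--Nirenberg bound via $\Psi^{-1/2}\leq(t_0+t)^{-1/2}$) correctly fill in the details the paper leaves implicit, including the explanation of why equality $p=1+\tfrac{4}{1+2\lambda}$ is lost.
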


\subsection*{Acknowedgements}
This work is partially supported 
by Grant-in-Aid for Young Scientists Research 
No.18K134450.
The author also thanks Prof.\ Yuta Wakasugi 
for giving a valuable comment for the result, 
which helps the author to accomplish to close the paper.


\end{document}